\newtheorem*{rep@theorem}{\rep@title}
\newcommand{\newreptheorem}[2]{%
\newenvironment{rep#1}[1]{%
 \def\rep@title{#2 \ref{##1}}%
 \begin{rep@theorem}}%
 {\end{rep@theorem}}}
\newtheorem{theorem}{Theorem}[section]
\newtheorem{lemma}[theorem]{Lemma}
\newtheorem{corollary}[theorem]{Corollary}
\newtheorem{conjecture}[theorem]{Conjecture}
\newtheorem*{remark}{Remark}
\newtheorem{example}{Example}
\newtheorem{problem}{Problem}
\newcommand{\h}{ \mathfrak{h}}
\newcommand{\lat}{\mathcal{L}}
\newcommand{\Qpl}{\mathcal{Q}}
\newcommand{\ZZ}{ \mathbb Z}
\newcommand{\NN}{ \mathbb N}
\newcommand{\QQ}{ \mathbb Q}
\newcommand{\RR}{ \mathbb R}
\newcommand{\inte}{\text{Int}}
\newcommand{\Int}{\text{Int}}
\title{Quasi-periodic tiling with multiplicity: a lattice enumeration approach}
\author{Swee Hong Chan\thanks{\noindent Department of Mathematics, 310 Malott Hall, Cornell University, \\
             Ithaca, NY 14853-4201 USA\\
Tel: (607) 255-4013   \ Fax: (607) 255-7149\\
              \texttt{sc2637@cornell.edu}           
}}
\date{\today}
\begin{document}
\maketitle
\begin{abstract}
The $k$-tiling problem for a convex polytope $P$ is  the problem of covering $\RR^d$ with  translates of  $P$ using a discrete multiset $\Lambda$ of translation vectors, such that
 every point in $\RR^d$ is covered exactly $k$ times, except possibly for the boundary of $P$ and its translates.
A classical result in the study of tiling problems is a theorem of McMullen \cite{McMullen}
 that a convex polytope $P$ that 1-tiles $\RR^d$ with a discrete multiset $\Lambda$ can, in fact, 1-tile $\RR^d$ with a lattice $\lat$.
A generalization of McMullen's theorem for $k$-tiling 
 was conjectured by Gravin, Robins, and Shiryaev \cite{Gravin-Robins-Shiryaev}, which states    that if  $P$ $k$-tiles $\RR^d$ with a discrete multiset $\Lambda$, then $P$  $m-$tiles $\RR^d$ with a lattice $\lat$ for some $m$.  
In this paper, we consider the case when $P$ $k$-tiles $\RR^d$ with a discrete multiset $\Lambda$ such that every element of $\Lambda$  is contained in a quasi-periodic set $\Qpl$ (i.e. a finite union of translated lattices).
 This is motivated by the  result  of Gravin, Kolountzakis, Robins, and Shiryaev \cite{Kolountzakis,GKRS}, showing  that for $d \in \{2,3\}$, if a polytope $P$ $k$-tiles $\RR^d$ with a discrete multiset $\Lambda$, then $P$  $m$-tiles $\RR^d$ with a  quasi-periodic set $\Qpl$ for some $m$.
 Here we show for all values of $d$ that if a polytope $P$ $k$-tiles  $\RR^d$ with  a discrete multiset $\Lambda$ that is contained in a quasi-periodic set $\Qpl$ that satisfies a mild hypothesis, then $P$ $m$-tiles $\RR^d$ with a lattice $\lat$ for some $m$.
This strengthens the results of Gravin, Kolountzakis, Robins, and Shiryaev, and  is a step in the direction of proving the conjecture of Gravin et al. \cite{Gravin-Robins-Shiryaev}.

\smallskip
\noindent \textbf{Keywords:}{ Multiple tilings; Tilings; Lattices; Lattice enumeration, Quasi-periodicity.}
\smallskip
\noindent \textbf{Subclass:} {52C22 }

\end{abstract}
\section{Introduction}\label{Introduction}
The multiple tiling problem can be described as follows:  cover every point in $\RR^d$ exactly $k$ times by translates of  a convex polytope using a discrete multiset $\Lambda$ (also known as a \textbf{tiling set}) of translation vectors. 
However, in the process of trying to cover every point in $\RR^d$,  we may be forced to cover points in the boundary of its  translates for  more than $k$ times. 
To avoid this technicality, we say that $P$ $k$-tiles $\RR^d$ with $\Lambda$ if every point that does not belong  to the boundary of any translate of $P$ is covered exactly $k$ times.
We call a polytope $P$ that satisfies the condition above  a $\mathbf{k}$\textbf{-tiler}. 
In the special case when $k$ is equal to $1$, the multiple tiling problem becomes what is traditionally known as the  translational tiling problem. For more details on the translational tiling problem, the reader is referred to   \cite{Alexandrov,Gruber,KM10} for  a nice overview of  the topic.


The translational tiling problem is a classical topic in discrete geometry with several beautiful structural results.
 In 1897, Minkowski \cite{Minkowski} gave a necessary condition for a polytope to be a $1$-tiler. He proved that  if a convex polytope $P$ $1$-tiles $\RR^d$, then $P$  must be centrally symmetric and all facets of $P$ must be centrally symmetric.  
 It was not until 50 years later that Venkov  \cite{Venkov} found a necessary and sufficient condition for a polytope $P$ to be a 1-tiler.
 He showed that   $P$ 1-tiles $\RR^d$ if and only if $P$ is centrally symmetric, all facets of $P$ are centrally symmetric, and  each belt of $P$ contains four or six facets.
  The same result was later rediscovered independently by McMullen \cite{McMullen} in 1980.

In contrast to the situation for $1$-tilers, there are still a lot of unsolved  problems on the structure of $k$-tilers. This is partly because $k$-tilers have a much richer structure compared to $1$-tilers. For example, in two dimensions there are only two types of convex polytopes that  $1$-tile $\RR^2$, namely centrally symmetric parallelograms and centrally symmetric hexagons. In contrast, all centrally symmetric integer polygons are $k$-tilers in  $\RR^2$ \cite{Gravin-Robins-Shiryaev}.  With that being said, there are several important results for multiple tiling that mirror the results for  $1$-tiling.    In 1994, Bolle \cite{Bolle} gave a necessary and sufficient condition for  a polytope to $k$-tile $\RR^2$ with a lattice, and in 2012, Gravin, Robins, and Shiryaev \cite{Gravin-Robins-Shiryaev} proved that a $k$-tiler in $\RR^d$ must be centrally symmetric and all its facets must be centrally symmetric, providing a multiple tiling  analogue for Minkowski's condition. 
%

The main motivation for this paper comes from  a classical result of McMullen \cite{McMullen}  that  if a convex polytope $P$  $1$-tiles  $\RR^d$ with a discrete multiset $\Lambda$, then $P$ can, in fact, $1$-tile $\RR^d$ with a lattice $\lat$.
A  generalization of McMullen's theorem for $k$-tiling  was conjectured by Gravin, Robins, and Shiryaev \cite{Gravin-Robins-Shiryaev}, and is stated below: 
 \begin{conjecture}\label{conjecture last boss}\textnormal{\cite[Conjecture 7.3]{Gravin-Robins-Shiryaev}}
If  a convex polytope $P$ $k$-tiles $\RR^d$ with a discrete multiset $\Lambda$, then $P$ $m$-tiles $\RR^d$ with a lattice $\lat$ for some $m$ (not necessarily equal to $k$).  
\end{conjecture}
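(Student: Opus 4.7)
The plan is to split Conjecture \ref{conjecture last boss} into two stages via an intermediate \emph{quasi-periodic} structure. First, I would try to show that any discrete tiling set $\Lambda$ producing a $k$-tiling is forced, by the structure of the centrally symmetric polytope $P$, to sit inside a quasi-periodic set $\Qpl = \bigcup_{i=1}^n (v_i + \lat_i)$; second, I would promote such a quasi-periodic $m$-tiling to a lattice tiling. This decomposition is natural because $P$ being centrally symmetric with centrally symmetric facets (Minkowski--Venkov, and Gravin--Robins--Shiryaev for multiple tilings) constrains $\widehat{1_P}$ enough that the Fourier transform of $\Lambda$ must be supported on a highly structured set.

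For the first stage, Gravin, Kolountzakis, Robins, and Shiryaev already achieve this in dimensions $d \in \{2,3\}$ via Fourier analysis combined with the belt structure of $P$. To extend to general $d$, I would attempt to iterate the belt-by-belt analysis: each belt of $P$ should constrain the projection of $\Lambda$ perpendicular to that belt to be one-dimensionally quasi-periodic, and combining all (finitely many) belt directions would cut out a quasi-periodic set in $\RR^d$. For the second stage, given a quasi-periodic $m$-tiling with set $\Qpl$, I would pass to a common refinement lattice $\lat$ finer than each of $\lat_1, \dots, \lat_n$, decompose each $v_i + \lat_i$ into finitely many cosets of $\lat$, and translate the tiling condition into a system of linear identities indexed by these cosets. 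Enumerating lattice points of $\lat$ inside sufficiently large windows that contain only finitely many translates of $P$ should then force a single sublattice that $m'$-tiles $\RR^d$ for some multiplicity $m'$; this is the ``lattice enumeration'' step announced in the paper's title.

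The main obstacle is the first stage when $d \geq 4$: the belt-based Fourier arguments in low dimensions do not globalize automatically, because the intersection of belt-perpendicular quasi-periodic conditions need not itself be quasi-periodic without additional input. This is precisely why the theorem actually proved in this paper takes quasi-periodicity of $\Lambda$ as a hypothesis (together with a mild additional assumption on $\Qpl$) and concentrates on the second stage. A full resolution of Conjecture \ref{conjecture last boss} in all dimensions will likely require new structural input to establish quasi-periodicity intrinsically from only the $k$-tiling hypothesis, and even in the paper's setting the mild hypothesis on $\Qpl$ hints that the passage from quasi-periodic to lattice tilings is itself subtle enough to need a nontrivial enumeration argument rather than a direct algebraic simplification.
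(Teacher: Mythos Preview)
The statement you are addressing is a \emph{conjecture}, and the paper does not prove it; it proves partial results (Theorems~\ref{main theorem 1} and~\ref{main theorem 2}) under additional hypotheses and explicitly leaves the full statement open (Section~\ref{conjectures}). You clearly recognize this, and your two-stage decomposition into ``prove $\Lambda$ is contained in a quasi-periodic set'' followed by ``upgrade quasi-periodic to lattice'' matches exactly the paper's own framing as Problem~\ref{conjecture bridge} and Problem~\ref{conjecture theorem 1.2}. So at the level of overall architecture your outline is aligned with the paper.

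There is, however, a genuine technical gap in your sketch of stage two that goes beyond the incompleteness you already acknowledge. You propose to ``pass to a common refinement lattice $\lat$ finer than each of $\lat_1,\dots,\lat_n$'' and decompose each $v_i+\lat_i$ into finitely many $\lat$-cosets. But a quasi-periodic set in the paper's sense is a finite union of translates of lattices that are \emph{not necessarily commensurable}; in general no such common refinement exists, and your coset-enumeration reduction collapses at the outset. The paper's actual stage-two machinery is quite different and is precisely calibrated to avoid this obstruction: when the $\lat_i$ are translates of a single lattice, it uses a generating-function asymptotic argument (Lemma~\ref{asymptotic method}) to extract a linear identity among the functions $L^{\h}_\lat(v-a_i)$, and the remark after that lemma explicitly notes the argument breaks for general quasi-periodic $\Qpl$. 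When the $\lat_i$ are genuinely different, the paper instead imposes the general-position Hypothesis~(\ref{technical condition}) and uses a topological path-connectedness argument (Lemma~\ref{general lambda}) to peel off a single lattice; Example~\ref{example rectangle} shows this hypothesis cannot simply be dropped. For the two-coset case without Hypothesis~(\ref{technical condition}), the paper brings in Weyl equidistribution. None of these tools appear in your outline, and your proposed ``single refinement lattice plus enumeration in large windows'' route would need a substantially new idea to handle incommensurable lattices.
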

There has been some recent progress on  Conjecture \ref{conjecture last boss} in lower dimensions.  
We say that a set $\Qpl$ is a \textbf{quasi-periodic set}  if $\Qpl$ is a finite union of translated 
lattices, not necessarily of the same lattice.
Kolountzakis \cite{Kolountzakis}  showed that if $P$ $k$-tiles  $\RR^2$ with a discrete multiset $\Lambda$, then $P$ can $m$-tile $\RR^2$ with a quasi-periodic set for some $m$. Kolountzakis' result was later extended by Shiryaev \cite{Shiryaev}, who proved  Conjecture \ref{conjecture last boss} when $d$ is equal to 2.
For the case when $d$ is equal to 3, Gravin, Kolountzakis, Robins, and Shiryaev \cite{GKRS} showed  that every $k$-tiler $P$ in $\RR^3$ can $m$-tile $\RR^3$ with a quasi-periodic set for some $m$.
Motivated by the results of \cite{Kolountzakis} and \cite{GKRS}, 
 we focus here on studying the 
\textbf{quasi-periodic tiling} problem for general dimensions,
which is the multiple tiling problem with an additional assumption that every element of the tiling multiset $\Lambda$ is contained in a {quasi-periodic set} (note that the multiset $\Lambda$  is not necessarily a quasi-periodic set).

We approach the quasi-periodic tiling problem by  studying  an equivalent lattice-point enumeration problem, which will be described in Section \ref{section lattice point enumeration}.  
This approach allows us to employ several  tools from  lattice-point enumeration that are otherwise 
 not available for  general multiple tiling problems. 
For more details regarding  discrete-point enumeration of polytopes, the reader is referred  to  the work of Beck and Robins \cite{Beck-Sinai} and  Barvinok \cite{Sasha-Barvinok}.

Throughout this paper, we will use two different notions for \textbf{general positions}, one for vectors and one for lattices. Let $P$ be a  a fixed convex polytope and let $\partial P$ denote the boundary of $P$.
 We say that a vector $v \in \RR^d$ is in  \textbf{general position} with respect to a discrete multiset $\Lambda$ if $v$ is not contained in $\partial P+\Lambda$, the union of translates of boundary of $P$ by $\Lambda$. The second notion is defined for lattices in $\RR^d$. Let $\Qpl$ be a union of $n$ translated lattices $\lat_1$, $\lat_2,\ldots, 
\lat_n$. 
We say that  $\lat_i$  is in  \textbf{ general position} with respect to $\Qpl$ if the set $\RR^d \setminus H_i$ is path-connected, where $H_i$ is defined as:
\begin{equation}\label{technical condition}
H_i:= \bigcup_{j=1, j\neq i}^{n} (\partial P+\lat_i) \cap (\partial P+\lat_j).
\end{equation}
We will refer to 
the hypothesis that $\lat_i$ is in  general position with respect to $\Qpl$  as Hypothesis \ref{technical condition}. 


The first result in this paper is  that if   Hypothesis \ref{technical condition} holds, then   Conjecture \ref{conjecture last boss} is true. 
\begin{theorem}\label{main theorem 1}
Let $P$ be a convex polytope that $k$-tiles $\RR^d$ with a discrete multiset $\Lambda$, and suppose that every element  of $\Lambda$ is  contained in a quasi-periodic set $\Qpl$. If a lattice $\lat$ in $\Qpl$ is in  general position with respect to $\Qpl$ and $\lat \cap \Lambda$ is non-empty,  then $P$ $m$-tiles $\RR^d$ with $\lat$ for some $m$. 
\end{theorem}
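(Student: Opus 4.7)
The plan is to decompose $\Lambda$ according to the cosets comprising $\Qpl$, use the general position hypothesis to show that the piece of $\Lambda$ lying in $\lat$ already multi-tiles $\RR^d$ on its own, and finally promote this to a multi-tiling by the full lattice $\lat$. After refining the given union so that $\Qpl = \bigsqcup_{i=1}^n \lat_i$ is disjoint, set $\Lambda_i := \Lambda \cap \lat_i$, which gives $\Lambda = \bigsqcup_i \Lambda_i$. By reindexing take $\lat = \lat_1$; the hypothesis $\lat \cap \Lambda \neq \emptyset$ then guarantees $\Lambda_1 \neq \emptyset$. For each $i$ define $g_i(x) := \#\{v \in \Lambda_i : x \in P+v\}$, a nonnegative integer-valued function that is locally constant off $\partial P + \Lambda_i$, and note that the $k$-tiling hypothesis reads $\sum_i g_i(x) = k$ for almost every $x \in \RR^d$.

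The first main step is to show that $g_1$ is constant almost everywhere, using the path-connectedness of $\RR^d \setminus H_1$. Fix two generic points $x_0, x_1 \in (\RR^d \setminus H_1) \setminus (\partial P + \Lambda)$ and join them by a path $\gamma$ inside the open set $\RR^d \setminus H_1$; I claim that at any point $x$ where $\gamma$ crosses $\partial P + \Lambda$, the value of $g_1$ is unchanged. Indeed, either $x \notin \partial P + \lat_1$, in which case no boundary of a $\Lambda_1$-translate passes through $x$ and $g_1$ is trivially preserved; or $x \in \partial P + \lat_1$, in which case the condition $x \notin H_1$ forces $x \notin \partial P + \lat_j$ for every $j \neq 1$, so no $g_j$ with $j \neq 1$ can change across $x$, and then the preservation of $\sum_j g_j \equiv k$ forces $g_1$ to remain constant as well. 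Hence $g_1$ is constant along $\gamma$, and so $g_1$ is constant on the path-connected open set $\RR^d \setminus H_1$ up to a null set; that is, $g_1 \equiv m_1$ a.e.\ for some integer $m_1$. In other words, $P$ $m_1$-tiles $\RR^d$ with the multiset $\Lambda_1 \subseteq \lat_1$.

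The second main step, and what I expect to be the principal obstacle, is to upgrade the tiling by $\Lambda_1$ to one by the full lattice $\lat = \lat_1$. This is precisely where the lattice-point enumeration reformulation of Section \ref{section lattice point enumeration} should intervene, since a multi-tiling by a proper sub-multiset of a lattice does not a priori lift to one by the whole lattice. The strategy is to exploit that, by translation-invariance, every translate $\Lambda_1 + w$ with $w \in \lat_1$ also $m_1$-tiles $\RR^d$ with $P$, and to combine this $\lat_1$-symmetry with the enumeration framework to force $\Lambda_1$ to be (as a multiset) a finite union of cosets of some finite-index sublattice $\lat_1' \subseteq \lat_1$. Once this periodicity is in hand, $\lat_1$ is a disjoint union of cosets of $\lat_1'$, each $m_1$-tiling $\RR^d$ with $P$ by translation-invariance, and summing over all such cosets yields an $m$-tiling by $\lat_1$ for an integer $m$ determined by $[\lat_1 : \lat_1']$ together with the multiplicity data of $\Lambda_1$. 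The technical difficulty is concentrated entirely here: the path-connectedness argument of the previous paragraph uses the general position hypothesis in a direct topological way, whereas the passage from ``tiles with a subset of a lattice'' to ``tiles with the lattice'' needs the paper's enumeration machinery to rule out the scenario in which $\Lambda_1$ is only quasi-periodic inside $\lat_1$ without being genuinely lattice-periodic.
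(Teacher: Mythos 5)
Your first step --- splitting $\Lambda$ into $\Lambda_1=\Lambda\cap\lat$ and the rest, and using path-connectedness of $\RR^d\setminus H_1$ together with the case analysis ($x\notin\partial P+\lat_1$ versus $x\in\partial P+\lat_1$ but $x\notin H_1$, so $x\notin\partial P+\lat_j$ for $j\neq 1$) to conclude that $P$ $m_1$-tiles with $\Lambda_1$ --- is exactly the paper's Lemma \ref{general lambda}, and it is correct (modulo the routine verifications that $m_1>0$ and that the locally constant values propagate to all generic points by density, which the paper spells out).

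The genuine gap is in your second step. You propose to pass from ``$P$ $m_1$-tiles with the multiset $\Lambda_1\subseteq\lat_1$'' to ``$P$ $m$-tiles with $\lat_1$'' by first forcing $\Lambda_1$ to be a finite union of cosets of a finite-index sublattice $\lat_1'\subseteq\lat_1$. That intermediate claim is false in general. Take $P=[0,1]^2$, $\lat_1=\ZZ^2$, and $\Lambda_1=\{(i,c_i+j):i,j\in\ZZ\}$ for an arbitrary non-periodic integer sequence $(c_i)$: this $1$-tiles $\RR^2$, lies in $\ZZ^2$, but is invariant under no finite-index sublattice (invariance under $(N,0)$ would force $c_{i+N}=c_i$). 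So no argument can extract lattice-periodicity of $\Lambda_1$ itself; the conclusion you need is only that the \emph{lattice-point enumerator} $v\mapsto\#\bigl(\lat_1\cap\{-1\cdot P^{\h}+v\}\bigr)$ is constant, which is strictly weaker and is the actual content of the paper's Corollary \ref{corollary one lattice}. The paper obtains this via Lemma \ref{asymptotic method}: restrict to $\Lambda_{\geq 0}$, encode the tiling as an identity of multivariate power series $\sum_{x} z^x$, multiply by $\prod_j(1-z_j)$, and take an Abelian limit $z\to(1,\dots,1)^-$ along a subsequence (Bolzano--Weierstrass) to produce a constant $g_1>0$ with $g_1\cdot L^{\h}_{\lat_1}(v)=m_1$ for all $v$; constancy of $L^{\h}_{\lat_1}$ then gives the $m$-tiling by $\lat_1$ via Lemma \ref{lemma GRS}. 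Your proposal correctly isolates where the difficulty lies but supplies a strategy for it that cannot work as stated, so the proof is incomplete at precisely the step that carries the main analytic content.
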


 If the quasi-periodic set  $\Qpl$ in Theorem \ref{main theorem 1} is also a lattice, then  the lattice $\Qpl$ is in  general position with respect to $\Qpl$ by definition, and Theorem \ref{main theorem 1} gives us the following corollary.
 
\begin{corollary}\label{corollary one lattice}
Let  $P$ be a convex polytope that $k$-tiles $\RR^d$ with a discrete multiset $\Lambda$.
 If every element of $\Lambda$ is  contained in a lattice $\lat$, then $P$  $m$-tiles $\RR^d$ with $\lat$ for some $m$. \qed 
\end{corollary}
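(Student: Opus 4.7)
My plan is to obtain this corollary as an immediate specialization of Theorem \ref{main theorem 1} to the case where the quasi-periodic set $\Qpl$ consists of just a single translated lattice, namely $\lat$ itself. The two hypotheses of Theorem \ref{main theorem 1} that must be verified are (a) that $\lat$ is in general position with respect to $\Qpl$, and (b) that $\lat \cap \Lambda$ is non-empty. Neither requires any genuine work, so I do not anticipate any serious obstacle; all the substance resides in the proof of Theorem \ref{main theorem 1}.

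For hypothesis (a), I would observe that $\Qpl$ is a union of $n=1$ translated lattice. Substituting $n=1$ in the definition \eqref{technical condition}, the indexing set $\{j : 1 \leq j \leq 1,\; j \neq 1\}$ is empty, so $H_1$ is an empty union and hence equals $\emptyset$. Therefore $\RR^d \setminus H_1 = \RR^d$, which is evidently path-connected, and the general position condition is satisfied automatically, as already noted in the paragraph preceding the corollary.

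For hypothesis (b), I would note that $\Lambda$ must be non-empty, since $P$ is a convex polytope that $k$-tiles $\RR^d$ with $\Lambda$ for some $k \geq 1$, and such a tiling is impossible with $\Lambda = \emptyset$. The assumption that every element of $\Lambda$ lies in $\lat$ gives $\Lambda \subseteq \lat$, so $\lat \cap \Lambda = \Lambda \neq \emptyset$. Having verified both hypotheses, I would simply invoke Theorem \ref{main theorem 1} to produce the desired integer $m$ such that $P$ $m$-tiles $\RR^d$ with the lattice $\lat$, completing the proof.
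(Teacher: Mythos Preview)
Your derivation mirrors exactly the sentence the paper places immediately before Corollary~\ref{corollary one lattice} in the introduction: when $\Qpl$ reduces to a single lattice, the set $H_1$ in \eqref{technical condition} is an empty union, so $\RR^d\setminus H_1=\RR^d$ is path-connected, and Theorem~\ref{main theorem 1} applies. Your verification of the non-emptiness of $\lat\cap\Lambda$ is also fine.

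There is, however, a logical subtlety you should be aware of. In the paper's actual development (Section~\ref{section main theorem 1}), the proof of Theorem~\ref{main theorem 1} \emph{invokes} Corollary~\ref{corollary one lattice} in its final step: after Lemma~\ref{general lambda} produces a multiset $\Lambda_1\subseteq\lat_i$ with which $P$ $m'$-tiles, the paper appeals to Corollary~\ref{corollary one lattice} to pass from $\Lambda_1$ to the full lattice $\lat_i$. Thus, deducing the corollary from Theorem~\ref{main theorem 1} is circular within the paper's own proof architecture. The paper's non-circular derivation of Corollary~\ref{corollary one lattice} is instead from Theorem~\ref{main theorem 3} (the $n=1$ case), which is established independently via Lemma~\ref{asymptotic method}. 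The introductory sentence you followed is best read as motivation for the statement rather than as its proof; the operative derivation is the one restated just after Theorem~\ref{main theorem 3}.
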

 
 We note that  Theorem \ref{main theorem 1} will fail to hold if the hypothesis that  $\lat$ is in  general position with respect to $\Qpl$ is omitted.
 In Example \ref{example rectangle},  we  present a a polytope  $P$ that $k$-tiles $\RR^d$ with a quasi-periodic set $\Qpl$ that contains a lattice $\lat$, and yet $\lat$ is not a tiling set of $P$.
  
The fact that Hypothesis \ref{technical condition} cannot be omitted from Theorem \ref{main theorem 1} naturally leads us to consider  the case where every lattice in $\Qpl$ is not in  general position with respect to $\Qpl$.
In Section \ref{section for main theorem 2} we address this scenario
 for the case where $\Qpl$ is a union of two translated copies of a lattice, and we show that 
  Conjecture \ref{conjecture last boss} holds in this case. 
\begin{theorem}\label{main theorem 2}
Let $P$ be a convex polytope that $k$-tiles $\RR^d$ with a discrete multiset $\Lambda$. 
If  every element of $\Lambda$ is contained in  a union of two translated copies of one single lattice, 
then there is a  lattice $\lat$ in $\RR^d$ such that $P$ $m$-tiles $\RR^d$ with $\lat$ for some $m$. 
\end{theorem}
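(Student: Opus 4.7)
The plan is to reduce Theorem \ref{main theorem 2} to Corollary \ref{corollary one lattice} and Theorem \ref{main theorem 1} via a case analysis on the two translated lattices $\lat_1 = u_1 + \Gamma$ and $\lat_2 = u_2 + \Gamma$ comprising $\Qpl$, where $\Gamma$ is the common underlying lattice. Writing $v = u_2 - u_1$, the analysis splits on whether $v$ is ``rational with respect to $\Gamma$.''

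First I would handle the rational case: if $nv \in \Gamma$ for some positive integer $n$, then $\Gamma' := \Gamma + \ZZ v$ is a lattice containing $\Gamma$ as a sublattice of index at most $n$, and $\Qpl$ is contained in the translate $u_1 + \Gamma'$. Translating $\Lambda$ by $-u_1$ preserves the multitiling structure, so invoking Corollary \ref{corollary one lattice} on the lattice $\Gamma'$ produces the desired lattice $m$-tiling.

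Next I would address the irrational case: $nv \notin \Gamma$ for every $n \geq 1$, so $\lat_1 \cap \lat_2 = \varnothing$. If Hypothesis \ref{technical condition} holds for either $\lat_i$ with $\lat_i \cap \Lambda \neq \varnothing$, then Theorem \ref{main theorem 1} applies directly. Otherwise the set $H_1 = (\partial P + \lat_1) \cap (\partial P + \lat_2)$ disconnects $\RR^d$. I would analyze this disconnection to perform a dimension reduction: a codimension-one piece of $H_1$ can only arise from facet-translate coincidences, which force the outward normal $n_F$ of some facet $F$ of $P$ to satisfy $n_F \cdot v \in n_F \cdot \Gamma$, i.e., $v$ is already ``rational modulo $\Gamma$'' along direction $n_F$. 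The facets with this property determine a slab decomposition of $\RR^d$ compatible with the facet structure of $P$, and within each slab the multitiling by $P$ restricts to a lower-dimensional multitiling problem whose tiling set is again contained in a union of two translated copies of a single lattice in dimension $d-1$. I would then close the argument by inducting on dimension, applying Theorem \ref{main theorem 2} in dimension $d - 1$.

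The main obstacle lies in executing the dimension reduction in this last subcase. Verifying that the induced $(d-1)$-dimensional tiling set genuinely satisfies the inductive hypothesis, and then assembling the per-slab lattice tilings into a single lattice tiling of $\RR^d$ with a uniform multiplicity $m$, requires a delicate gluing argument leveraging the central symmetry of $P$ and of all its facets (Gravin--Robins--Shiryaev). Identifying the correct cross-sectional polytope, checking that facet coincidences in the reduced problem propagate correctly, and ensuring compatibility of the lifted lattices across slabs form the technical heart of the proof.
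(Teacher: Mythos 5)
Your rational case is fine and matches the paper's first step: when $u_2-u_1$ is rational with respect to $\Gamma$, both translates sit inside a single refined lattice and Corollary \ref{corollary one lattice} finishes. The problem is the irrational case, which is where all the content of the theorem lives (Example \ref{example rectangle} shows that neither translate need be in general position, so Theorem \ref{main theorem 1} genuinely cannot be invoked there). Your plan for that case --- extract a ``slab decomposition'' from the disconnecting set $H_1$, restrict the multitiling to each slab, and induct on dimension --- is not a proof but a sketch, and the sketch has a structural flaw: the translates of $P$ meeting a given slab do so at many different heights, so their cross-sections are \emph{different} $(d-1)$-dimensional polytopes, not translates of a single polytope $P'$. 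Hence the restricted problem is not a multitiling of one convex polytope by a discrete multiset, and the inductive hypothesis does not apply. The auxiliary claims (that a codimension-one component of $H_1$ forces $n_F\cdot v\in n_F\cdot\Gamma$ for some facet normal, that the resulting slabs are compatible with the facet structure, and that the per-slab lattices can be glued into one lattice with uniform multiplicity) are all asserted rather than established, and you acknowledge as much.

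The paper takes a completely different route that sidesteps geometry of $H_1$ entirely. After reducing to $\lat_1=\ZZ^d$, $\lat_2=a+\ZZ^d$ with $a$ irrational, it passes to a refined lattice $(\tfrac1N\ZZ)^d$ chosen so that the rationally dependent coordinates of $a$ behave well, and applies Lemma \ref{asymptotic method} to obtain the functional equation $g_1L^{\h}(v)+g_2L^{\h}(v-a)=k$ with $g_1,g_2\ge 0$. Iterating this gives a two-term linear recursion for $L_j=L^{\h}(v-ja)-\tfrac{k}{g_1+g_2}$; if $g_1\ne g_2$ the boundedness of the periodic function $L^{\h}$ forces $L_0=0$, and if $g_1=g_2$ the Weyl equidistribution criterion produces an odd multiple $(2j+1)a$ arbitrarily close to the refined lattice, forcing $L_0=L_1$ and again $L_0=0$. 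Either way $L^{\h}$ is constant, so by Lemma \ref{lemma GRS} the polytope $m$-tiles with $(\tfrac1N\ZZ)^d$. This analytic/equidistribution mechanism is the missing idea in your proposal; without it (or a genuinely completed version of your dimension reduction) the irrational case remains open.
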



The paper is organized as follows. In Section \ref{definition} we introduce definitions and notations used in this paper. 
We  use Section \ref{section lattice point enumeration} to establish the connection between the $k$-tiling problem and a lattice-point enumeration problem.  
Section \ref{section main theorem 1} is devoted to the proof of Theorem \ref{main theorem 1}, and Section \ref{section for main theorem 2} is devoted to the proof of Theorem \ref{main theorem 2}. 
Finally, in  Section \ref{conjectures} we  discuss possible future research that can be done to prove  Conjecture \ref{conjecture last boss}.

\section{Definitions and preliminaries}\label{definition}
Throughout this paper, we use $P$ to denote a convex polytope in $\RR^d$,  $\Int(P)$ to denote the interior of $P$, and $\partial P$ to denote the boundary of $P$ (the closure of $P$ minus the interior of P).
We note that   there is no loss in generality in assuming that $P$ is a convex polytope, because 
every  convex body that $k$-tiles $\RR^d$ is necessarily a polytope \cite{McMullen}.

We use  $\Lambda$  to denote a discrete multiset of vectors in $\RR^d$, $\lat$ to denote a lattice in $\RR^d$, and $\Qpl$ to  denote a \textbf{quasi-periodic set}, which is a finite union of translated lattices, not necessarily of the same lattice.  
We  use $\#(A)$ to denote the cardinality of a finite multiset $A$ (counted with multiplicities).
The intersection of a multiset $A$ and a set $S$, denoted by $A \cap S$, is  the multiset that contains all elements $a$ in $A$ that are also contained in $S$.
The multiplicity of an element $a$ in $A \cap S$ is equal to the multiplicity of $a$ in $A$.
 The complement of a set $S$ with respect to a multiset $A$, denoted by $A\setminus S$, is the set $A \cap S^c$.

A convex polytope $P$ is said to \textbf{ k-tile } $\RR^d$ ($k$ being a positive natural number) with  a discrete multiset $\Lambda$ of vectors in $\RR^d$ if  
\begin{equation}\label{equation k-tiling}\sum_{\lambda \in \Lambda} \boldsymbol{1}_{P+\lambda}(v)=k,\end{equation}
 for all $v \notin \partial P+\Lambda$, where $\boldsymbol{1}_X$ is the indicator function of the set $X$. 
 
%

Throughout this paper, we assume that  $\h$ is a  fixed vector  in $\RR^d$ such that every line with direction vector $\h$  meets $\partial P$ 
at finitely many points. 
The \textbf{half-open} counterpart $P^{\h}$ of a convex polytope $P$ is the subset of the closure of $P$ that contains all points $v \in \RR^d$ 
which satisfies the property that
  for a sufficiently small $\epsilon_v>0$, the ray $r_{\epsilon_v}:=\{v+ c \h \ | \ 0 < c < \epsilon_v \} $ is contained in  $\inte(P)$. 
  Note that $P^{\h}$ consists of $\Int(P)$ and a part of $\partial P$.
   In the particular case when $P$ is a cube, 
the polytope $P^{\h}$ is 
  the half-open cube  defined in \cite{Stanley}.
  
   For a a discrete multiset $\Lambda$ and a convex polytope $P$ in $\RR^d$, the \textbf{$\Lambda$-point enumerator} of $P$ is the integer $\#(\Lambda \cap P)$, which is the number of points of $\Lambda$ (counted with multiplicities) contained in $P$. When $\Lambda$ is a lattice, we refer to $\#(\Lambda \cap P)$ as a lattice-point enumerator. 
   We  define two integer-valued functions, $L_{\Lambda}$ and $L_{\Lambda}^{\h}$,  on every point $v$ in $\RR^d$ as follows: 
   \[L_{\Lambda}(v):= \# (\Lambda \cap  \{-1 \cdot P+ v \}),   \ L_{\Lambda}^{\h}(v):=\# (\Lambda \cap  \{-1 \cdot P^{\h}+ v \}), \] i.e. $L_{\Lambda}(v)$ is the number of points of $\Lambda$ contained in the translate of $-1 \cdot P$ by $v$, and  $L_{\Lambda}^{\h}(v)$ is the number of points of $\Lambda$ contained in the translate of  $-1 \cdot P^{\h}$ by $v$. If the intended multiset $\Lambda$ is evident from the context,
  we will use $L$ and $L^{\h}$ as a shorthand for $L_{\Lambda}$ and  $L_{\Lambda}^{\h}$, respectively. 
These two functions  play an important role in relating the multiple tiling problem to the lattice-point enumeration problem, which is discussed in Section \ref{section lattice point enumeration}.

%
%
%
\section{Lattice-point enumeration of polytopes}\label{section lattice point enumeration}

In this section, we  present a lattice-point enumeration problem that is equivalent to $k$-tiling problem.  
This equivalence  was first shown in \cite{Gravin-Robins-Shiryaev}, where  it was employed to show that all rational $k$-tilers can $m$-tile with a lattice for some $m$. 
However,  we replace the polytope $P$ by its half-open counterpart $P^{\h}$ in the statement of the equivalence. 
This is done so that we can drop the technical condition in the equivalence concerning vectors in general position. 


\begin{lemma}\label{half open}
A $d$-dimensional convex polytope $P$ $k$-tiles $\RR^d$ with a discrete multiset $\Lambda$  if and only if its half-open counterpart $P^{\h}$ $k-$tiles $\RR^d$ with $\Lambda$. Moreover, if $P^{\h}$ $k$-tiles $\RR^d$ with $\Lambda$, then
\begin{equation}\label{equation half open}
\sum_{\lambda \in \Lambda} \boldsymbol{1}_{P^{\h}+\lambda}(v)=k,
\end{equation}
for all $v $ in $\RR^d$.
\end{lemma}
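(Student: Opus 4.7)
The plan is to reduce the statement to proving the strengthened equality \eqref{equation half open}, and to establish that equality by a ``generic perturbation in direction $\h$'' argument. The key point is that $P^{\h}$ was designed precisely so that, for any $v \in \RR^d$, the membership $v \in P^{\h} + \lambda$ records whether the perturbed point $v + \delta \h$ lies in $P + \lambda$ for all sufficiently small $\delta > 0$. Once this is shown pointwise, the sum $\sum_\lambda \boldsymbol{1}_{P^{\h}+\lambda}(v)$ inherits the value $k$ from the $k$-tiling relation applied to the perturbed point.

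First I would note the easy observations: $P^{\h} \subseteq \bar{P}$ with $\Int(P^{\h}) = \Int(P)$, so $\boldsymbol{1}_{P+\lambda}$ and $\boldsymbol{1}_{P^{\h}+\lambda}$ agree off $\partial P + \lambda$. In particular, for $v \notin \partial P + \Lambda$ the two sums coincide, which handles the equivalence of the $k$-tiling conditions on the complement of $\partial P + \Lambda$ in both directions. The real content is therefore to verify \eqref{equation half open} for \emph{every} $v \in \RR^d$ assuming $P$ $k$-tiles $\RR^d$ with $\Lambda$; this will automatically upgrade the ``if'' direction, since then $P^{\h}$ $k$-tiles $\RR^d$ with $\Lambda$ trivially.

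To establish \eqref{equation half open}, I would fix $v \in \RR^d$ and exploit two finiteness facts. Since $P$ is bounded and $\Lambda$ is discrete, only finitely many translates $P + \lambda$ meet a small neighborhood of $v$; call these the relevant translates. For each relevant $\lambda$, the hypothesis on $\h$ guarantees that the line $\{v + c \h : c \in \RR\}$ meets $\partial P + \lambda$ at only finitely many points, so I can choose $\delta_0 > 0$ small enough that, for all $\delta \in (0, \delta_0)$ and every relevant $\lambda$, the point $v + \delta \h$ avoids $\partial P + \lambda$; in particular $v + \delta \h \notin \partial P + \Lambda$. For each relevant $\lambda$, setting $w = v - \lambda$, the short ray $\{w + c\h : 0 < c < \delta_0\}$ is connected and disjoint from $\partial P$, hence lies entirely in the open set $\Int(P)$ or entirely in the open set $\RR^d \setminus P$; by the very definition of $P^{\h}$, the first alternative occurs exactly when $w \in P^{\h}$. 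Shrinking $\delta_0$ to work simultaneously for all relevant $\lambda$, this gives the pointwise identity $\boldsymbol{1}_{P+\lambda}(v + \delta \h) = \boldsymbol{1}_{P^{\h}+\lambda}(v)$ for every $\lambda \in \Lambda$. Summing and applying \eqref{equation k-tiling} at $v + \delta \h$ yields
\[
\sum_{\lambda \in \Lambda} \boldsymbol{1}_{P^{\h}+\lambda}(v) \;=\; \sum_{\lambda \in \Lambda} \boldsymbol{1}_{P+\lambda}(v + \delta \h) \;=\; k,
\]
which is exactly \eqref{equation half open}.

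The main obstacle I anticipate is the dichotomy argument in the step above: ruling out that the ray from $w$ in direction $\h$ lingers on $\partial P$ or oscillates between $\Int(P)$ and its complement near $c = 0$. This is precisely where the standing hypothesis on $\h$ — that every line in direction $\h$ meets $\partial P$ in finitely many points — does the essential work, together with the finiteness of relevant translates which lets us pick a single $\delta$ that works for all $\lambda$ at once.
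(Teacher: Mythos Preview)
Your proposal is correct and follows essentially the same route as the paper: both arguments first dispose of the equivalence via the observation that $P$ and $P^{\h}$ share the same interior, and then establish \eqref{equation half open} by choosing $\delta>0$ small enough that the ray $\{v+c\h:0<c<\delta\}$ avoids $\partial P+\Lambda$, so that membership of $v$ in $P^{\h}+\lambda$ coincides with membership of the perturbed point $v+\delta\h$ in $P+\lambda$. Your version is in fact slightly more careful than the paper's in making explicit the finiteness of the set of relevant translates and the connectedness dichotomy for the short ray, but the underlying idea is identical.
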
 

\begin{proof}
Note that $P$ and $P^{\h}$ have the same interior, which  implies that
\[\sum_{\lambda \in \Lambda}\boldsymbol{1}_{P^{\h}+\lambda}(v)=\sum_{\lambda \in \Lambda} \boldsymbol{1}_{P+\lambda}(v), \]
for all $v \notin \partial P+\Lambda$. Therefore, $P$ $k$-tiles $\RR^d$ with $\Lambda$ if and only if $P^{\h}$ $k$-tiles $\RR^d$ with $\Lambda$. 

To prove the second part of the claim, let $v$ be an arbitrary point in $\RR^d$. By our assumption on $\h$, the ray $r_{\epsilon_v}= \{v+ c\h \ | \  0< c < \epsilon_v \}$   intersects $\partial P+\Lambda$ at finitely many points.
 Hence for a sufficiently small $\epsilon_v>0$, the ray $r_{\epsilon_v}$ does not intersect $\partial P+\Lambda$. 
Because $P^{\h}$(and hence $P$) $k$-tiles $\RR^d$ with $\Lambda$, this implies that there are exactly $k$ vectors $\lambda_1, \ldots, \lambda_k$ in $\Lambda$ such that $r_{\epsilon_v}$ is contained in the interior of $P+\lambda_i$ for all $i$. By the definition of half-open polytopes in Section \ref{definition}, this means that $v$ is contained in $P^{\h}+\lambda_i$ for all $i$, and hence we have:
\begin{equation*}\sum_{\lambda \in \Lambda} \boldsymbol{1}_{P^{\h}+\lambda}(v)=k,\end{equation*}
for all $v \in \RR^d$.
\qed \end{proof}

 The lemma below was shown in \cite{Gravin-Robins-Shiryaev} for  the case when $v \in \RR^d$ is in  general position with respect to $\Lambda$; 
  our proof is virtually identical, with a minor adjustment for the case when $P$ is replaced by $P^{\h}$.

\begin{lemma}\label{lemma GRS}\textnormal{(c.f. \cite[Lemma 3.1]{Gravin-Robins-Shiryaev})} A convex polytope $P$ $k$-tiles $\RR^d$ with a  discrete multiset $\Lambda$  if and only $L_{\Lambda}( v)$ is equal to $k$
 for every $v \in \RR^d$ that is in  general position with respect to $\Lambda$. 
 Moreover,  $P^{\h}$ $k$-tiles $\RR^d$ with $\Lambda$ if and only $L_{\Lambda}^{\h}( v)$ is equal to $k$ for every $v$ in $\RR^d$. 
\end{lemma}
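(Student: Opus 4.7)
The plan is to exploit the elementary reflection-duality $\lambda \in -P + v$ if and only if $v \in P + \lambda$, which converts counting translates of $P$ (or of $P^{\h}$) covering a point $v$ into counting points of $\Lambda$ inside a reflected translate of $P$ (or of $P^{\h}$) at $v$. First I would verify the identities
\[
L_{\Lambda}(v) \;=\; \sum_{\lambda \in \Lambda}\boldsymbol{1}_{P+\lambda}(v), \qquad L_{\Lambda}^{\h}(v) \;=\; \sum_{\lambda \in \Lambda}\boldsymbol{1}_{P^{\h}+\lambda}(v),
\]
for every $v\in\RR^d$, by a one-line translation computation (and the analogous computation for $P^{\h}$, using that $v\in P^{\h}+\lambda$ iff $\lambda\in -P^{\h}+v$).

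With these identities in hand, the first equivalence is essentially a restatement of the definition of $k$-tiling. A point $v$ fails to be in general position with respect to $\Lambda$ precisely when $v \in \partial P + \Lambda$, so equation (\ref{equation k-tiling}) says exactly that the right-hand side of the first identity equals $k$ at every $v$ in general position. Thus $P$ $k$-tiles $\RR^d$ with $\Lambda$ if and only if $L_{\Lambda}(v)=k$ for every such $v$.

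For the half-open equivalence I would invoke Lemma \ref{half open} in both directions. If $P^{\h}$ $k$-tiles $\RR^d$ with $\Lambda$, then by equation (\ref{equation half open}) of Lemma \ref{half open} the sum $\sum_{\lambda}\boldsymbol{1}_{P^{\h}+\lambda}(v)$ equals $k$ for \emph{all} $v\in\RR^d$, not only those in general position; the second identity above then gives $L_{\Lambda}^{\h}(v)=k$ everywhere. Conversely, suppose $L_{\Lambda}^{\h}(v)=k$ for every $v\in\RR^d$. For $v$ in general position with respect to $\Lambda$, one has $v\notin\partial P+\lambda$ for every $\lambda\in\Lambda$, so $\boldsymbol{1}_{P^{\h}+\lambda}(v)=\boldsymbol{1}_{P+\lambda}(v)$ for each $\lambda$; hence $L_{\Lambda}(v)=L_{\Lambda}^{\h}(v)=k$ at all such $v$. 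By the first equivalence, $P$ $k$-tiles $\RR^d$ with $\Lambda$, and Lemma \ref{half open} then promotes this to a $k$-tiling by $P^{\h}$.

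There is no substantive obstacle; the authors already note that the argument mirrors \cite[Lemma 3.1]{Gravin-Robins-Shiryaev}. The only point that requires care is exactly the promotion step in the second equivalence: one needs that the half-open version holds \emph{everywhere} in $\RR^d$ rather than merely at general-position points, and this is precisely what Lemma \ref{half open} delivers, allowing us to drop the genericity hypothesis that appears in the original $P$-version of the statement.
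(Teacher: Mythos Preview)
Your proposal is correct and follows essentially the same approach as the paper: both hinge on the reflection identity $\boldsymbol{1}_{P+\lambda}(v)=\boldsymbol{1}_{-P+v}(\lambda)$ (and its half-open analogue) to rewrite the covering sum as $L_\Lambda$ (resp.\ $L_\Lambda^{\h}$), and both invoke Lemma~\ref{half open} to secure the ``everywhere'' version for $P^{\h}$. The only cosmetic difference is your converse for the half-open case, where you detour through the closed-$P$ statement before reapplying Lemma~\ref{half open}; the paper instead reads the converse directly off the identity together with the definition of $k$-tiling, which is marginally more direct but amounts to the same argument.
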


\begin{proof}
For every $v$ in $\RR^d$, we can write
\[\sum_{\lambda \in \Lambda} \boldsymbol{1}_{P^\h+\lambda}(v)= \sum_{\lambda \in \Lambda} \boldsymbol{1}_{-1 \cdot P^\h+v}(\lambda)= \# (\Lambda \cap \{-1 \cdot P^\h + v\})=L_{\Lambda}( v).\]
By Equation \ref{equation half open} in   Lemma \ref{half open}, this implies that $P^{\h}$  $k$-tiles $\RR^d$ if and only if $L_{\Lambda}^{\h}(v)$  is equal to $k$ for every $v$ in $\RR^d$. 
By a similar argument, 
 Equation \ref{equation k-tiling} in the definition of $k$-tilers implies that  $P$ $k$-tiles $\RR^d$ if and only if $L_{\Lambda}( v)
$ is equal to $k$ for every $v \in \RR^d$ that is  in  general position with respect to $\Lambda$.  
\qed \end{proof}


Here we list two  properties of the function $L_\Lambda$ that will be used throughout this paper: 
\begin{enumerate}[leftmargin=*]
 \item \label{property periodic} If $\lat$ is a lattice in $\RR^d$, then the function $L_{\lat}$ is a periodic function of $\lat$ (i.e.  $L_{\lat}(v+\lambda)=L_{\lat}(v) $ for every $v$ in $\RR^d$ and every $\lambda$ in $\lat$).  This is because a lattice-point enumerator is invariant under translation by elements contained in the lattice. 
 \item \label{property general position} The function $L_{\Lambda}$ is a constant function in a sufficiently small neighborhood $B_v$ of $v$ in $\RR^d$ if $v$ is in  general position with respect to $\Lambda$. This is because if $v \notin \partial P+\Lambda$, then $-1 \cdot P +v$ does not contain any points from $\Lambda$ in its boundary. Hence moving $v$ in any sufficiently small direction will not change the value of $L_{\Lambda}(v)$. 
\end{enumerate}
It can be easily checked that the two properties above also hold for the function $L_{\Lambda}^{\h}$.

%
%
%

\section{Proof of Theorem  \ref{main theorem 1} } \label{section main theorem 1}
We  start this section by proving a functional equation involving the function $L_{\Lambda}$. 
The proof borrows several ideas from asymptotic analysis of infinite sums in $\RR^d$.
\begin{lemma}\label{asymptotic method}
Let $P$ be a convex polytope that $k$-tiles $\RR^d$ with a discrete multiset  $\Lambda$.  
Let $\lat$ be a lattice and let $a_1,\ldots, a_n$ be vectors in $\RR^d$.
If    every element of $\Lambda$ is contained in the 
finite union  $\bigcup_{i=1}^{n} a_i+\lat$, then
there are non-negative real numbers   $g_1, \ldots, g_n$ such that
\begin{equation}\label{equation in props asymptotic}\sum_{i=1}^{n}  g_i \cdot L_{\lat}^{\h}(v-a_i)=k,
\end{equation}
for all $v \in \RR^d$.
\end{lemma}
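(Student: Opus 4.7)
The plan is to extract the coefficients $g_i$ as asymptotic densities of the pieces of $\Lambda$ lying in each coset $a_i+\lat$, and to identify them through an averaging argument over large Euclidean balls. First I would pass to the half-open polytope via Lemma~\ref{half open} and Lemma~\ref{lemma GRS}, which together yield the unconditional identity $L_{\Lambda}^{\h}(v)=k$ for every $v\in\RR^d$. Partition $\Lambda$ as a disjoint union of multisets $\Lambda=\bigsqcup_{i=1}^{n}\Lambda_i$ with $\Lambda_i\subseteq a_i+\lat$, obtained by assigning each occurrence of a point of $\Lambda$ to one coset that contains it; the identity then refines to
\[
\sum_{i=1}^{n} L_{\Lambda_i}^{\h}(v)=k\qquad\text{for all }v\in\RR^d,
\]
so the lemma reduces to replacing $L_{\Lambda_i}^{\h}(v)$ by $g_i\,L_{\lat}^{\h}(v-a_i)$ for some $g_i\ge 0$.

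Next I would introduce the empirical densities $g_i^R:=\#(\Lambda_i\cap B_R)/\#(\lat\cap B_R)$, where $B_R\subseteq\RR^d$ is the Euclidean ball of radius $R$ about the origin. The $k$-tiling hypothesis forces $\#(\Lambda\cap B_R)=O(R^d)$ (each point of $\RR^d$ is covered at most $k$ times, so the total $P$-volume swept out by $\Lambda\cap B_R$ is at most $k\,\vol(B_R+P)$), so the densities $g_i^R$ remain uniformly bounded, and a diagonal subsequence argument produces $R_j\to\infty$ along which $g_i^{R_j}\to g_i\in[0,\infty)$ for every $i$. The key computation is to average the identity $\sum_i L_{\Lambda_i}^{\h}(v+\lambda)=k$ over $\lambda\in B_R\cap\lat$ and switch the order of summation: writing $\mu=a_i+\mu'\in\Lambda_i$ with $\mu'\in\lat$, the number of $\lambda\in B_R\cap\lat$ contributing to $L_{\Lambda_i}^{\h}(v+\lambda)$ through $\mu$ equals $\#\bigl(\lat\cap(B_R-\mu')\cap(a_i+P^{\h}-v)\bigr)$, which for $\mu'$ at bounded distance from $\partial B_R$ reduces (using $-\lat=\lat$) to $\#(\lat\cap(a_i+P^{\h}-v))=L_{\lat}^{\h}(v-a_i)$. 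Therefore
\[
\frac{1}{\#(B_R\cap\lat)}\sum_{\lambda\in B_R\cap\lat}L_{\Lambda_i}^{\h}(v+\lambda)\;\longrightarrow\;g_i\cdot L_{\lat}^{\h}(v-a_i)\qquad(R=R_j\to\infty),
\]
and summing over the finitely many $i$, together with the fact that the averaged left-hand side is identically $k$, delivers $\sum_{i=1}^{n} g_i L_{\lat}^{\h}(v-a_i)=k$ for all $v\in\RR^d$, with $g_i\ge0$ by construction.

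The main obstacle I foresee is controlling the boundary contribution in the averaging step: one needs that the $\mu\in\Lambda_i$ within bounded distance of $\partial B_R$ contribute $o(\#(\lat\cap B_R))$ to the sum. This should follow from the lattice counting estimate $\#(\lat\cap B_R)=\vol(B_R)/\det(\lat)+O(R^{d-1})$ combined with a second application of the $k$-tiling volume bound, which gives $\#(\Lambda\cap(B_R\setminus B_{R-C}))=O(R^{d-1})$ on any thin annulus, and hence a total error of order $O(1/R)$ after dividing by $\#(B_R\cap\lat)=\Theta(R^d)$. Since the $g_i$ depend only on $\Lambda_i$ and $\lat$, they are automatically independent of $v$, so the limiting identity holds pointwise rather than only on average.
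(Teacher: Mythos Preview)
Your argument is correct and reaches the same conclusion, but by a genuinely different route than the paper. The paper proves Lemma~\ref{asymptotic method} via a generating-function (Abel-summation) method: it restricts to $\Lambda_{\geq 0}=\Lambda\cap\RR^d_{\geq 0}$, writes the identity $\sum_{\lambda}\boldsymbol{1}_{P^{\h}+\lambda}=k$ as a power-series equality in $z=(z_1,\dots,z_d)$ with $|z_j|<1$, multiplies by $\prod_j(1-z_j)$, and lets $z\to(1,\dots,1)^-$; the coefficients $g_i$ arise as subsequential limits of $s_i(z)=\prod_j(1-z_j)\sum_{y\in\Lambda_i-a_i}z^y$, extracted by Bolzano--Weierstrass after showing $0\le s_i(z)\le k+1$. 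Your approach replaces this analytic regularization by a geometric one: you average the exact identity $\sum_i L^{\h}_{\Lambda_i}(v+\lambda)=k$ over $\lambda\in\lat\cap B_R$, switch the order of summation, and read off $g_i$ as subsequential limits of the empirical densities $\#(\Lambda_i\cap B_R)/\#(\lat\cap B_R)$. Both proofs ultimately hinge on the same compactness step and the same observation that the $g_i$ are defined independently of $v$; the difference is that the paper regularizes by exponential weights $z^y$ while you regularize by sharp ball cutoffs. Your version is arguably more transparent (the $g_i$ have an immediate density interpretation) and avoids the truncation to the positive orthant; the paper's version ties in with standard lattice-point generating-function machinery. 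One phrasing slip: where you write ``for $\mu'$ at bounded distance from $\partial B_R$'' you mean the complementary case, $\mu'$ well inside $B_R$; the boundary annulus is precisely what your final paragraph disposes of via the $O(R^{d-1})$ estimate, and that estimate is sound.
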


\begin{proof}
Without loss of generality, we can assume that $\lat=\ZZ^d$. 
%
For a real vector $w=(w_1, \ldots, w_d)$ in $\RR^d$, we use  $\RR^d_{\geq w}$ to denote the  set $\{(w_1', \ldots, w_d'): w_i' \geq w_i \text{ for } i \in \{1,\ldots,d\} \}$.
We use  $ \Lambda_{\geq 0}$ to denote  the multiset $\Lambda \cap \RR^d_{\geq0}$.

Let $v \in \RR^d$ be an arbitrary vector.
Because $P^{\h}$ $k$-tiles $\RR^d$ with $\Lambda$, this  implies   that there is   a vector ${\alpha(v)} \in \RR^d$ such that every point in $\RR^{d}_{\geq {\alpha(v)}}$  is covered exactly $k$ times by $P^{\h}+v+\Lambda_{\geq 0}$. Also notice that because $\Lambda_{\geq 0}$ is in the positive orthant, there is a vector $\beta(v)$ in $\RR^d$ such that $P^{\h}+v+\Lambda_{\geq 0}$ is contained in  $\RR^d_{\geq \beta(v)}$. Without loss of generality, we can assume that both ${\alpha(v)}$ and $\beta(v)$ are integer vectors. 

Let $\Gamma(v)$ be the multiset $ ( P^{\h}+v+\Lambda_{\geq 0}) \setminus \RR^d_{\geq {\alpha(v)}} $. 
Because $ P^{\h}+v+\Lambda_{\geq 0}$ covers every point in $\RR^d_{\geq {\alpha(v)}}$ exactly $k$ times, we  have the following equality:
\begin{align}\label{equation start}
\sum_{\substack{x \in P^{\h}+v+\Lambda_{\geq 0}, \\ x \in \ZZ^d}} z^x &= \sum_{x \in \ZZ^d_{\geq {\alpha(v)}} } kz^x + \sum_{x \in \Gamma(v) \cap \ZZ^d}z^x,
\end{align}
where $z^x$ is the multivariable polynomial $z_1^{x_1}\ldots z_d^{x_d}$ and  $x$ is an integer vector $(x_1, \ldots, x_d)$.
  We  assume $|z_j|<1$ so  that all the sums in Equation \ref{equation start}  converge to a well-defined value. 

  We define the multisets  $\Lambda_i$ for $i\in\{1,\ldots, n\}$  recursively by   $\Lambda_i:= ( \Lambda_{\geq 0} \cap \{a_i +\ZZ^d\} )\setminus \bigcup_{j=1}^{i-1} \Lambda_j$ for $ i \in \{1,\ldots, n\}$. 
  Note that  $\boldsymbol{1}_{\Lambda_{\geq 0}}= \sum_{i=1}^{n} \boldsymbol{1}_{\Lambda_i}$, and  every element of $\Lambda_i-a_i$ is contained in $\ZZ^d$. 
  With this notation, Equation \ref{equation start} now becomes 
  \begin{align}
   \sum_{x \in \ZZ^d_{\geq {\alpha(v)}}  } kz^x + \sum_{x \in \Gamma(v) \cap \ZZ^d}z^x  
  = \sum_{i=1}^{n} \sum_{\substack{x \in P^{\h}+v+\Lambda_i,\\  x \in \ZZ^d}} z^x  
 & =\sum_{i=1}^{n} \left(\sum_{\substack{x \in P^{\h}+v+a_i, \\  x \in \ZZ^d}} z^x  \cdot \sum_{y \in \Lambda_i -a_i} z^y   \right) \label{equation intermediate}\\
&= \sum_{i=1}^{n} \# (\ZZ^d \cap \{ P^{\h}+a_i+v \} ) \cdot  \sum_{y \in \Lambda_i -a_i} z^y. \label{equation important}
\end{align}

Let $s_i(z):=\sum_{y \in \Lambda_i -a_i} z^y \cdot \prod_{j=1}^d (1-z_j)$.
By 
 multiplying 
   Equation \ref{equation important} with $\prod_{j=1}^d (1-z_j) $, we obtain that:
  \begin{align}\label{equation no limit}
   \sum_{x \in \ZZ^d_{\geq {\alpha(v)}}  } kz^x \cdot \prod_{j=1}^d (1-z_j) + \sum_{x \in \Gamma(v) \cap \ZZ^d}z^x \cdot  \prod_{j=1}^d (1-z_j)=\sum_{i=1}^{n} \# (\ZZ^d \cap \{ P^{\h}+a_i+v \} ) \cdot s_i(z)
   \end{align}
We now show that the left side of Equation \ref{equation no limit} converges to $k$ as $z$ converges to $(1,\ldots,1)$ from below.

First note that every element in $\Gamma(v)$   is contained in $\RR^d_{\geq \beta(v)}  \setminus  \RR^d_{\geq {\alpha(v)}}  $, and every element of $\Gamma(v)$ has multiplicity at most $k$. 
Also note that $\RR^d_{\geq \beta(v)}  \setminus  \RR^d_{\geq {\alpha(v)}}  $
 is contained in  the set $\bigcup_{i=1}^d R_i(v)$, where $R_i(v)$ is the set
 \[R_i(v):=\{(a_1, \ldots, a_d) \ : \ {\beta(v)}_i \leq a_i < {\alpha(v)}_i  \text{ and }  {\beta(v)}_j\leq  a_j \text{ for all } j \neq i , \ 1\leq j \leq d \ \}.\]

Because $|z_i|<1$  for all $i$,  we  have the following closed-form expressions:
\begin{align}\label{equation stripes}
\sum_{x \in \ZZ^d_{\geq {\alpha(v)}}} z^x= \prod_{j=1}^d \frac{z_j^{{\alpha(v)}_j}}{1-z_j} ; \ 
\sum_{x \in R_i(v) \cap \ZZ^d}z^x =(1-z_i^{{\alpha(v)}_i-\beta(v)_i}) \cdot \prod_{j=1}^{d}\frac{z_j^{{\beta(v)}_j}}{1-z_j}.
\end{align}

  Because $\Gamma(v)$ is contained in $\bigcup_{i=1}^{d} R_i(v)$ and each element in $\Gamma(v)$ has multiplicity at most  $k$, we  conclude that:
  \begin{align}\label{equation gamma}
  \lim_{z \to (1,\ldots,1)^-} \sum_{x \in \Gamma(v) \cap \ZZ^d}z^x \cdot \prod_{j=1}^d (1-z_j)& \leq   \lim_{z \to (1,\ldots,1)^-} \sum_{i=1}^{d} \sum_{x \in (R_i(v) \cap \ZZ^d)}kz^x \cdot \prod_{j=1}^d (1-z_j) \notag \\
  &= \lim_{z \to (1,\ldots,1)^-}  \sum_{i=1}^{d} k (1-z_i^{{\alpha(v)}_i-\beta(v)_i}) z^{{\beta(v)}}=0.
  \end{align}

Combining
Equation \ref{equation stripes} and Equation \ref{equation gamma}, we get:
\begin{equation}\label{right hand side}
\lim_{z \to (1,\ldots,1)^-}  \sum_{x \in \ZZ^d_{\geq {\alpha(v)}}  } kz^x \cdot \prod_{j=1}^d (1-z_j) + \sum_{x \in \Gamma(v) \cap \ZZ^d}z^x \cdot  \prod_{j=1}^d (1-z_j) =k,
\end{equation} 
which shows that the left side of Equation \ref{equation no limit} converges to $k$ as $z$ converges to $(1,\ldots,1)$ from below.
Note that
if   $ \lim_{z \to (1,\ldots,1)^-} s_i(z)$  exists for all $i$,  then 
taking the limit of Equation \ref{equation no limit} as $z$ 
 converges to $(1,\ldots,1)$ from below will give us
Equation \ref{equation in props asymptotic}, and the proof will be done.
However, the limit of $s_i(z)$ does not always exist, and  hence we need a more subtle approach to derive Equation \ref{equation in props asymptotic}.


Since $|z_i|<1$ for all $i$,  we have  that $s_i(z)$ is a positive real number for all $i$.
Also note that 
the multiplicity of every element in $\Lambda_i$ can not exceed $k$,
and every element of $\Lambda_i-a_i$ is contained in $\ZZ^d_{\geq -a_i}$.
These facts allow us to derive the following inequality
\begin{align}\label{equation s is bounded above}
s_i(z)= \sum_{y\in \Lambda_i -a_i} z^y \cdot \prod_{j=1}^d (1-z_j)
&\leq \sum_{y \in  \ZZ^{d}_{\geq -a_i}} k z^y \cdot \prod_{j=1}^d (1-z_j) \notag \\
&= k z^{[-a_i]} \cdot \prod_{j=1}^{d} \frac{1}{1-z_j} \cdot \prod_{j=1}^d (1-z_j) \notag\\
&= k z^{[-a_i]},
\end{align} 
where $[x]$ is the integer part of the real vector $x$ in $\RR^d$. 
Note that as $z$ converges to $(1,\ldots,1)$ from below, the right side of Equation \ref{equation s is bounded above} is bounded from above by $k+1$.
Hence as $z$ converges to $(1,\ldots,1)$ from below, the value of $s_i(z)$  is bounded between $0$ and $k+1$. 
The Bolzano-Weierstrass theorem \cite{BS92} then implies that there exists a sequence $(\mathfrak z_u)_{u \in \NN}$ that converges to $(1,\ldots,1)$ from below and with the property that $\lim_{u \to \infty} s_i(\mathfrak z_u)$ exists for all $i$.

 Let  $g_i:=\lim_{u \to \infty} s_i(\mathfrak z_u)$ for all $i$, note that $g_i$ is non-negative because  $s_i(\mathfrak z_u)$ is a  positive real number for all $u \in \NN$. 
 Also note that  the definition of $s_i(z)$  does not involve $v$, 
 and hence each $g_i$ is a constant that is independent from the choice of $v$. 
By substituting  $\mathfrak{z}_u$ into Equation \ref{equation no limit} and then taking the limit as $u$ goes to infinity,  we get the following equality:
\begin{align}
 & \sum_{i=1}^{n} \# (\ZZ^d \cap \{ P^{\h}+a_i+v \} ) \cdot \lim_{u \to \infty}s_i(\mathfrak z_u) \notag \\
= &    \lim_{u \to \infty} \sum_{x \in \ZZ^d_{\geq {\alpha(v)}}  } k \mathfrak z_u^x \cdot \prod_{j=1}^d (1- {\mathfrak z_u}_j) + \sum_{x \in \Gamma(v) \cap \ZZ^d} \mathfrak z_u^x \cdot  \prod_{j=1}^d (1-{\mathfrak z_u}_j) \notag.
\end{align}
Substituting  Equation \ref{right hand side} into the right side of the equation above, we get:
\begin{align}\label{equation last in the lemma}
\sum_{i=1}^{n}  \# (\ZZ^d \cap \{ P^{\h}+a_i+v \} ) \cdot g_i = & k. \end{align}
To get Equation \ref{equation in props asymptotic} from Equation \ref{equation last in the lemma}, note that  
\begin{equation}\label{equation technical}
\# (\ZZ^d  \cap \{ P^{\h}+a_i+v \} ) =\# (-1 \cdot \ZZ^d  \cap \{ -1 \cdot P^{\h} -a_i-v )\} )=L_{\ZZ^d }^{\h}(-a_i-v).
\end{equation}
 Substituting Equation \ref{equation technical} into the left side of Equation \ref{equation last in the lemma}, we get:
 \begin{align}\label{equation final in the lemma}
\sum_{i=1}^{n} L_{\Lambda}^{\h}(-a_i-v) \cdot g_i = & k. 
\end{align}
As the choice of $v \in \RR^d$ is arbitrary, we can replace $v$ in Equation \ref{equation final in the lemma} by $-v$ to get 
  Equation \ref{equation in props asymptotic} and the proof is now complete. 
\qed \end{proof}

\begin{remark}
The proof of Lemma \ref{asymptotic method} can be made much shorter if  we use the  stronger assumption that $\Lambda$ is equal to the disjoint union of finitely many translates of one lattice.
Indeed, in this case, we have:
\begin{align*}
k=\# (\Lambda \cap \{-1 \cdot P^{\h} + v\}) &= \sum_{i=1}^n \# ( 
\{a_i+ \lat
\} \cap \{-1 \cdot P^{\h} + v\}) \\&= \sum_{i=1}^n \# (\lat \cap \{  -1 \cdot P^{\h} + v-a_i\})=\sum_{i=1}^{n} L_{\lat}^{\h}(v-a_i), \notag 
\end{align*}
for all $v$ in $\RR^d$. 
\end{remark}

\begin{remark}
The proof of Lemma \ref{asymptotic method} no longer works if  we use the  weaker  assumption that every element of $\Lambda$ is contained in a quasi-periodic set, because   the original assumption is essential for  deriving Equation \ref{equation intermediate} from Equation \ref{equation start}.
\end{remark}


Now we show that
 the value $g_1,\ldots, g_n$ in Lemma \ref{asymptotic method} can, in fact, be chosen to be rational numbers with some value $m$ on the right side of Equation \ref{equation in props asymptotic}, which gives us the following theorem.
 
\begin{theorem}\label{main theorem 3} 
Let $P$ be a convex polytope that $k$-tiles $\RR^d$ with a discrete multiset $\Lambda$.
Let $\lat$ be a lattice and let $a_1,\ldots, a_n$ be vectors in $\RR^d$.
If    every element of $\Lambda$ is contained in the 
finite union  $\bigcup_{i=1}^{n} a_i+\lat$, 
 then $P$  $m$-tiles in $\RR^d$ for some $m$ with a finite union of copies of the lattices $a_1+\lat$, $, \ldots,$  $a_n+\lat$. 
\end{theorem}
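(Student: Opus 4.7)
The plan is to upgrade the non-negative real coefficients produced by Lemma \ref{asymptotic method} to non-negative rational ones, and then to clear denominators so that the resulting linear combination can be interpreted as the point-enumerator of a multiset of the desired form.

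First I would observe that the functional equation from Lemma \ref{asymptotic method},
\[
\sum_{i=1}^{n} g_i \cdot L_{\lat}^{\h}(v - a_i) = k \quad \text{for all } v \in \RR^d,
\]
is equivalent to a \emph{finite} system of linear equations in the variables $g_1, \ldots, g_n$. Indeed, $L_{\lat}^{\h}$ is $\lat$-periodic, integer-valued, and bounded (since any translate of $-1 \cdot P^{\h}$ meets $\lat$ in only finitely many points), so it realises only finitely many values on $\RR^d$. Consequently the $\ZZ^n$-valued map $v \mapsto (L_{\lat}^{\h}(v - a_1), \ldots, L_{\lat}^{\h}(v - a_n))$ takes only finitely many distinct vectors $w^{(1)}, \ldots, w^{(N)} \in \ZZ^n$, and the functional equation reduces to the integer linear system $\sum_{i=1}^{n} g_i \cdot w^{(j)}_i = k$ for $j = 1, \ldots, N$.

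Next I would exchange the real non-negative solution guaranteed by Lemma \ref{asymptotic method} for a non-negative rational one. The set of non-negative real solutions of this integer linear system is a non-empty rational polyhedron in $\RR^n$, and every non-empty rational polyhedron contains a rational point: intersecting with a sufficiently large rational box $\{0 \leq g_i \leq M\}$ produces a non-empty rational polytope, whose vertices are automatically rational, and any such vertex is a rational solution of the original system. Clearing a common denominator $D$ of the resulting rationals $g_1', \ldots, g_n'$ gives non-negative integers $c_i := D g_i'$ and an integer $m := D k$ with
\[
\sum_{i=1}^{n} c_i \cdot L_{\lat}^{\h}(v - a_i) = m \quad \text{for all } v \in \RR^d.
\]

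Finally I would assemble the multiset $\Lambda^{*}$ consisting of $c_i$ disjoint copies of each translated lattice $a_i + \lat$, for $i = 1, \ldots, n$. Using the identity $L_{a_i + \lat}^{\h}(v) = L_{\lat}^{\h}(v - a_i)$ and the additivity of the point-enumerator over disjoint multiset unions, I would obtain $L_{\Lambda^{*}}^{\h}(v) = m$ for every $v \in \RR^d$. Lemma \ref{lemma GRS} then shows that $P^{\h}$ $m$-tiles $\RR^d$ with $\Lambda^{*}$, and Lemma \ref{half open} transfers the conclusion to $P$ itself. The main obstacle in this scheme is the rationality step: passing from a real non-negative solution of a rational linear system to a rational non-negative one requires invoking the structure theory of rational polyhedra, whereas without the sign constraints it would be a trivial consequence of Gaussian elimination over $\QQ$.
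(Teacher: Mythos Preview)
Your argument is correct and follows essentially the same strategy as the paper: start from Lemma~\ref{asymptotic method}, upgrade the non-negative real solution to a non-negative integral one via rational linear algebra, and conclude with Lemma~\ref{lemma GRS}. The only cosmetic difference is in how the middle step is packaged: the paper passes to the \emph{linear} space $V^{\perp}$ orthogonal to all difference vectors $\bigl(L_{\lat}^{\h}(v-a_i)-L_{\lat}^{\h}(w-a_i)\bigr)_i$ and extracts a non-zero non-negative integer point from the rational cone $V^{\perp}\cap\RR^n_{\ge 0}$, whereas you keep the right-hand side fixed at $k$, reduce to a finite integer \emph{affine} system by the boundedness and integrality of $L_{\lat}^{\h}$, and take a rational vertex of the resulting polyhedron. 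Your route has the small bonus of producing $m=Dk$ as an explicit multiple of $k$, and it makes the finiteness reduction more explicit than the paper does; conversely the paper's $V^{\perp}$ formulation makes it transparent that any non-negative integer vector orthogonal to the difference space already yields a tiling, without reference to the original $k$.
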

\begin{proof}
 By  Lemma \ref{asymptotic method}, there are non-negative real numbers $g_1,\ldots, g_n$ such that
 \begin{equation}\label{equation vector space}
 \sum_{i=1}^{n}   g_i \cdot L_{\lat}^{\h}(v-a_i)=k,
\end{equation}
for  all vectors $v$ in $\RR^d$. 
For an arbitrary $v$ and $w$ in $\RR^d$, let $l_i(v,w)$ be the integer 
\[l_i(v,w):= L_{\lat}^{h}(v-a_i) -L_{\lat}^{h}(w-a_i),\]
and  let $V$  be the vector space   in $\RR^n$  spanned by the following set of vectors:
\[ \{ (l_1(v,w),l_2(v,w), \ldots, l_n(v,w) ) \ : \ v,w \in \RR^d \}.\]

Note that by Equation \ref{equation vector space}, the vector 
$ (g_1, \ldots, g_n)$ is contained in the orthogonal complement $V^\perp$ of $V$, and hence $V^\perp$ contains a non-zero non-negative vector.
Also note that    $V$ are generated by integer vectors, and hence  $V$ and $V^{\perp}$ have a basis of integer vectors. 
These two facts imply that 
   there is a non-negative non-zero integer vector $(g_1', \ldots g_n')$ that is contained in $V^{\perp}$.
By the construction of the vector space $V$,  the statement that $(g_1', \ldots g_n')$  is orthogonal to $V$ is equivalent to the following equation:
 \begin{equation}
 m=\sum_{i=1}^{n}   g_i' \cdot \#(\lat \cap \{-1\cdot P^{\h} +v-a_1\})= \sum_{i=1}^{n}   g_i' \cdot \#(a_i+\lat \cap \{-1\cdot P^{\h} +v\}), 
\end{equation}
for some positive integer $m$ and for all $v$ in $\RR^d$. By Lemma \ref{lemma GRS} this implies that $P$ $m$-tiles $\RR^d$ with the union of the translated lattices $a_1+\lat$, $, \ldots,$  $a_n+\lat$,  where each element of  $a_i+\lat$ has multiplicity  $g_i'$.  
\qed \end{proof}

In the case when all elements of $\Lambda$ are contained in a lattice, Theorem \ref{main theorem 3} gives us the following corollary:
\begin{repcorollary}{corollary one lattice} 
Let  $P$ be a convex polytope that $k$-tiles $\RR^d$ with a discrete multiset $\Lambda$.
 If every  element of $\Lambda$ is  contained in a lattice $\lat$, then $P$  $m$-tiles $\RR^d$ with $\lat$ for some $m$. \qed 
\end{repcorollary}



We now present a technical  lemma that allows us to reduce Theorem \ref{main theorem 1}  to the situation where all elements of $\Lambda$ are contained in a lattice, so that we can apply Corollary \ref{corollary one lattice} to prove Theorem \ref{main theorem 1}.

 \begin{lemma}\label{general lambda}
Let $P$ be a convex polytope that $k$-tiles $\RR^d$ with a discrete multiset $\Lambda$, and suppose that $\Lambda$ is the disjoint union of two  discrete multisets $\Lambda_1$ and $\Lambda_2$.
If the set $\RR^d \setminus (\partial P+ \Lambda_1) \cap (\partial P+\Lambda_2)$ is path-connected and $\Lambda_1$ is non-empty, then $P$  $m$-tiles $\RR^d$ with  $\Lambda_1$  for some $m$. 
\end{lemma}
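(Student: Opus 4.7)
The plan is to replace $P$ by its half-open counterpart $P^{\h}$ (via Lemma \ref{half open}) and show that the lattice-point enumerator $L_{\Lambda_1}^{\h}$ is constant on the path-connected set $S := \RR^d \setminus \bigl((\partial P + \Lambda_1) \cap (\partial P + \Lambda_2)\bigr)$. The key algebraic input is that Lemma \ref{lemma GRS} applied to the full tiling set $\Lambda$ gives $L_{\Lambda}^{\h}(v) = k$ for every $v \in \RR^d$, which combined with the disjoint decomposition $\Lambda = \Lambda_1 \sqcup \Lambda_2$ yields the pointwise identity
\[
L_{\Lambda_1}^{\h}(v) \;=\; k - L_{\Lambda_2}^{\h}(v)
\]
on all of $\RR^d$.

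The heart of the argument is to show that $L_{\Lambda_1}^{\h}$ is locally constant on $S$, by a short case analysis. Any $v \in S$ satisfies $v \notin \partial P + \Lambda_1$ or $v \notin \partial P + \Lambda_2$. In the first case, the $L^{\h}$-analogue of Property \ref{property general position} (noted at the end of Section \ref{section lattice point enumeration}) directly implies that $L_{\Lambda_1}^{\h}$ is constant in a neighborhood of $v$. In the second case, the same property applied to $\Lambda_2$ gives local constancy of $L_{\Lambda_2}^{\h}$ at $v$, which transfers to $L_{\Lambda_1}^{\h}$ via the identity above. Since $S$ is path-connected by hypothesis, local constancy is promoted to global constancy, giving $L_{\Lambda_1}^{\h}(v) = m$ on all of $S$ for some non-negative integer $m$.

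To finish, I would appeal to the first half of Lemma \ref{lemma GRS}, which requires only that $L_{\Lambda_1}(v) = m$ for every $v$ in general position with respect to $\Lambda_1$. Any such $v$ automatically lies in $S$ (because $v \notin \partial P + \Lambda_1$ prevents $v$ from being in the intersection), and also satisfies $L_{\Lambda_1}(v) = L_{\Lambda_1}^{\h}(v)$ since no element of $\Lambda_1$ can lie on $\partial(-P + v)$. Combining, $L_{\Lambda_1}(v) = m$ for every such $v$, and Lemma \ref{lemma GRS} concludes that $P$ $m$-tiles $\RR^d$ with $\Lambda_1$. The assumption $\Lambda_1 \neq \emptyset$ guarantees $m \geq 1$: fix $\lambda_1 \in \Lambda_1$ and choose any $v \in \inte(P) + \lambda_1$ in general position with respect to $\Lambda_1$, which is possible because $\partial P + \Lambda_1$ has empty interior; then $L_{\Lambda_1}(v) \geq 1$, so $m \geq 1$.

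The main subtlety will be the case analysis that leverages $L_{\Lambda_1}^{\h} = k - L_{\Lambda_2}^{\h}$ to obtain local constancy of $L_{\Lambda_1}^{\h}$ at points where it could a priori jump; path-connectedness of $S$ is then exactly strong enough to glue these local pieces into a single constant value across $S$. Everything else is essentially bookkeeping with the definitions of $L_{\Lambda_i}$, $L_{\Lambda_i}^{\h}$, and general position.
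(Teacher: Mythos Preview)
Your proposal is correct and follows essentially the same approach as the paper: derive $L_{\Lambda_1}^{\h}=k-L_{\Lambda_2}^{\h}$ from Lemma~\ref{lemma GRS}, use the two-case analysis (general position with respect to $\Lambda_1$ or to $\Lambda_2$) to get local constancy of $L_{\Lambda_1}^{\h}$ on $S$, promote to global constancy by path-connectedness, and then invoke Lemma~\ref{lemma GRS} together with $\Lambda_1\neq\emptyset$ to finish. Your final step is in fact slightly cleaner than the paper's: you note directly that every $v$ in general position with respect to $\Lambda_1$ already lies in $S$, whereas the paper passes through a density-of-$S$ argument to reach the same conclusion.
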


\begin{proof}
By Lemma \ref{lemma GRS}, the fact that $P$ $k$-tiles $\RR^d$ with $\Lambda=\Lambda_1 \sqcup \Lambda_2$ implies that
\begin{equation}\label{equation general lambda}
L_{\Lambda_1}^{\h}(v)+ L_{\Lambda_2}^{\h}(v)= L_{\Lambda}^{\h}(v)=k,
\end{equation}
for all $v$ in $\RR^d$.

Let  $v_1$ and $v_2$ be two points in $\RR^d \setminus (\partial P+ \Lambda_1) \cap (\partial P+\Lambda_2)$. 
Because $\RR^d \setminus (\partial P+ \Lambda_1) \cap (\partial P+\Lambda_2)$ is path-connected, there is a path $\mathfrak P: [0,1] \to \RR^d$  starting at $v_1$ and ending at $v_2$  such that $\mathfrak P$ does not contain  points from $(\partial P+ \Lambda_1) \cap (\partial P+\Lambda_2)$. 
We claim that the function $L_{\Lambda_1}^{\h}(\mathfrak P(x))$ remains constant $x$ goes from $0$ to $1$.
 
Suppose to the contrary that  $L_{\Lambda_1}^{\h}(\mathfrak P(x))$ is not a constant function.
 This means that there is   $\alpha \in [0,1]$ such that the function $L_{\Lambda_1}^{\h}(\mathfrak P(x))$ is not  constant  in every open neighborhood of $\alpha$. 
 Because $\mathfrak P (\alpha)$ is not contained in $(\partial P+ \Lambda_1) \cap (\partial P+ \Lambda_2)$,  either one of the following scenarios  will hold:
\begin{itemize}[leftmargin=*]
\item $\mathfrak P (\alpha) $ is not contained in $\partial P+\Lambda_1$. This means that $\mathfrak P (\alpha)$ is in  general position  with respect to $\Lambda_1$. By  Property \ref{property general position}  in Section \ref{section lattice point enumeration}, the function $L_{\Lambda_1}^{\h}$  is constant in a sufficiently small neighborhood of $\mathfrak P(\alpha)$, contradicting the assumption on  $\alpha$.

 \item $\mathfrak P (\alpha) $ is not contained in $\partial P+\Lambda_2$. This means that $\mathfrak P (\alpha)$ is in  general position  with respect  to $\Lambda_2$.
  By  Property \ref{property general position}   in Section \ref{section lattice point enumeration}, the function $L_{\Lambda_2}^{\h}$  is constant in a sufficiently small neighborhood of $\mathfrak P(\alpha)$.
By Equation \ref{equation general lambda} we have   
$L_{\Lambda_1}^{\h}= k- L_{\Lambda_2}^{\h}$, and hence $L_{\Lambda_1}^{\h}$ is also a constant function in a sufficiently small neighborhood of $\mathfrak P(\alpha)$,  contradicting to the assumption on $\alpha$.
  \end{itemize}
Hence    $L_{\Lambda_1}^{\h}(v)$ has a constant value $m$ for all $v$ in $\RR^d\setminus (\partial P+ \Lambda_1) \cap (\partial P+ \Lambda_2)$.

We now show that $m$ is a positive integer. 
Because $\Lambda_1$ is non-empty, we have   $L_{\Lambda}^{\h}(v_1)= \# (\Lambda \cap  \{-1 \cdot P^{\h}+ v_1 \})$ is positive   for some $v_1$ in an open set $B$ of $\RR^d$.
Because the set 
 $ \RR^d \setminus (\partial P+ \Lambda_1) \cap (\partial P +\Lambda_2)$  is  dense in $\RR^d$, there exists $v_2 \in  \RR^d \setminus (\partial P+ \Lambda_1) \cap (\partial P +\Lambda_2)$ that is also contained in $B$.
This implies that $m=L_{\Lambda_1}^{\h}(v_2)=L_{\Lambda_1}^{\h}(v_1)>0$, and hence $m$ is a positive integer.


Now let $v \in \RR^d$ be a vector in  general position with respect to $\Lambda_1$.
By Property \ref{property general position} in Section \ref{section lattice point enumeration},   
the function $L_{\Lambda_1}^{\h}(v)$ is a constant function in an open neighborhood $B_v$ of $v$ in $\RR^d$.
Because the set 
 $ \RR^d \setminus (\partial P+ \Lambda_1) \cap (\partial P +\Lambda_2)$  is  dense in $\RR^d$,  there exists $w$ in $\RR^d \setminus (\partial P+ \Lambda_1) \cap (\partial P +\Lambda_2)$ that is  contained  in $B_v$.
 By the argument above, this implies that $L_{\Lambda_1}^{\h}(v)= L_{\Lambda_1}^{\h}(w)=m$.
 Because the choice of $v$ is arbitrary, this implies that $L_{\Lambda_1}^{\h}(v)=m$ for every  $v \in \RR^d$ that is in  general position with respect to $\Lambda_1$. 
 By Lemma \ref{lemma GRS}, we conclude that $P$ $m$-tiles $\RR^d$ with $\Lambda_1$.
\qed \end{proof}

We  now proceed  to prove Theorem \ref{main theorem 1}. 
\begin{reptheorem}{main theorem 1}
Let $P$ be a convex polytope that $k$-tiles $\RR^d$ with a discrete multiset $\Lambda$, and suppose that every element  of $\Lambda$ is  contained in a quasi-periodic set $\Qpl$. If a lattice $\lat$ in $\Qpl$ is in  general position with respect to $\Qpl$ and $\lat \cap \Lambda$ is non-empty,  then $P$ $m$-tiles $\RR^d$ with $\lat$ for some $m$. 
\end{reptheorem}

\begin{proof}
Without loss of generality, we can assume that $\lat_i$ is a lattice instead of a translate of a lattice. 
 Let $\Lambda_1=\Lambda \cap \lat_i$ and $\Lambda_2= \Lambda \setminus \Lambda_1$. 
 We have  that $\Lambda$ is a disjoint union of $\Lambda_1$ and $\Lambda_2$, and by the distributive law  the set $H=(\partial P+\Lambda_1) \cap \partial (P+\Lambda_2)$ is contained in $H_i$ (where $H_i$ is as defined in Equation \ref{technical condition}). 
 Because $\RR^d \setminus H_i$ is path-connected by the assumption that $\lat$ is in general position with respect to $\Qpl$, this implies  that $\RR^d \setminus H$ is path-connected.
 Also note that by assumption $\Lambda_1$ is a non-empty multiset.
 Hence by Lemma \ref{general lambda} $P$ $m'$-tiles $\RR^d$ with $\Lambda_1$ for some $m'$.
 Because every element of $\Lambda_1$ is contained in $\lat_i$, we conclude that $P$  $m$-tiles $\RR^d$ with $\lat_i$ for some $m$ by Corollary \ref{corollary one lattice} . 
\qed  \end{proof}

Note  that the assumption that the lattice in Theorem \ref{main theorem 1}  is in  general position  can not be omitted from the statement of the theorem, as  seen in Example \ref{example rectangle} below.

\begin{example}\label{example rectangle}
Let $P$ be a rectangle in $\RR^2$ with $(0,0) , (0,\frac{1}{2}), (1,0), (1, \frac{1}{2})$ as vertices.
Let $\lat_1$ be the lattice $\ZZ^2$, let $\lat_2$ be the translated lattice $( \frac{\sqrt 2}{2},\frac{1}{2})+\ZZ^2$, and let $\Qpl=\lat_1 \cup \lat_2$.
It can be seen from Figure \ref{figure tiling} that $P$ $1$-tiles $\RR^2$ with $\Qpl$, but
$P$ does not $m$-tile $\RR^2$ with $\lat_1$ or $\lat_2$ for any $m$.  
 Also notice that $\lat_1$ and $\lat_2$ are not in  general position to $\Qpl$, as the sets
 $\RR^2 \setminus H_1=\RR^2 \setminus H_2= \RR^2 \setminus \{(x,y) \in \RR^2 \ : \ x \in \ZZ \}$ are not path-connected.
 \end{example}

  \begin{figure}[ht]\label{figure tiling}
  \caption{A rectangle $P$ that 1-tiles $\RR^2$ with $\Qpl=\lat_1 \cup \lat_2$, but does not $m$-tile with $\lat_1$ or $\lat_2$.}
  \centering
    \includegraphics{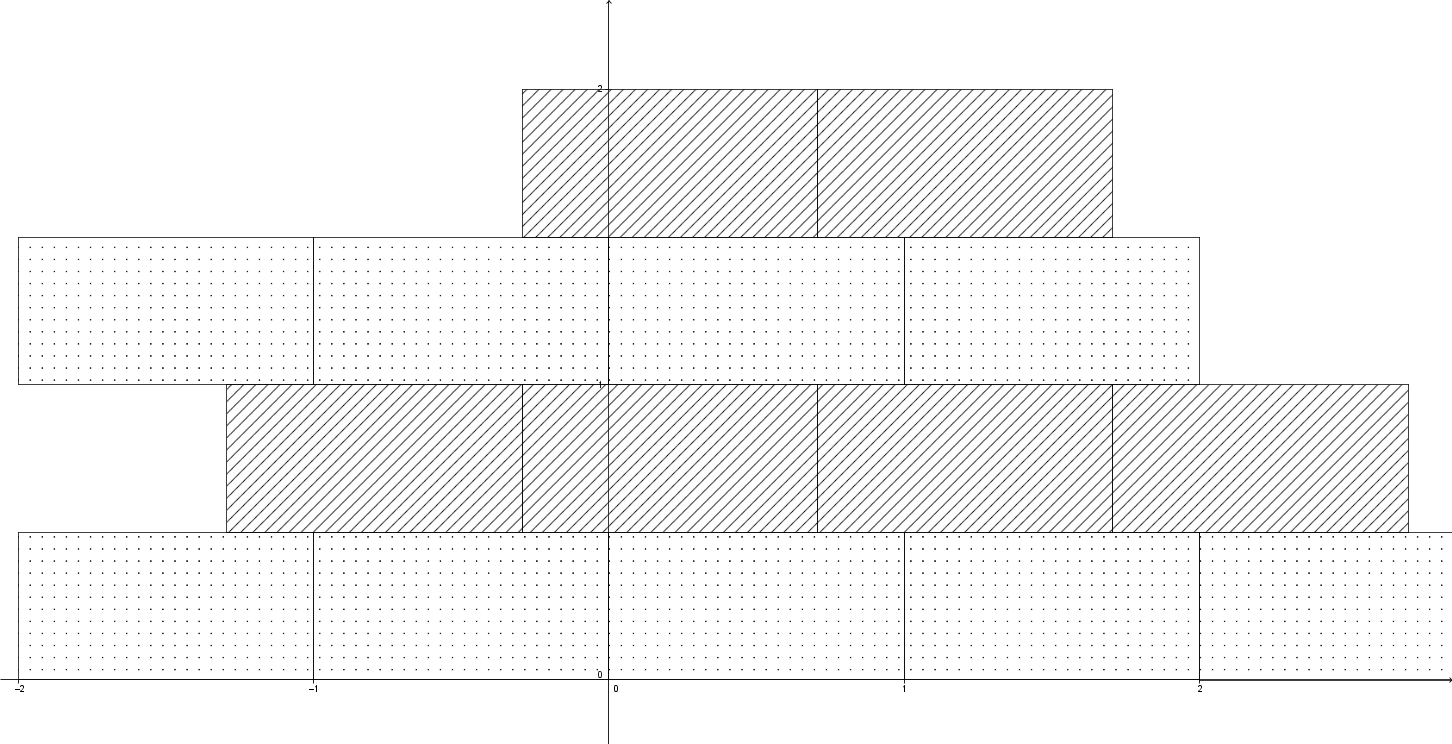}
\end{figure}

\section{Quasi-periodic tiling without Hypothesis \ref{technical condition}} \label{section for main theorem 2}
In this section, we  discuss quasi-periodic tiling in the situation when  Hypothesis \ref{technical condition}  is omitted.
As we observed from Example \ref{example rectangle}, the condition that a lattice $\lat$ is in  general position with respect to the quasi-periodic set $\Qpl$ can not be dropped.
However, this does not
preclude the possibility that $P$ $m$-tiles $\RR^d$ for some $m$ with some lattice $\lat$ that is not contained in $\Qpl$.  
For example, the rectangle $P$ in Example \ref{example rectangle}
can $2$-tile $\RR^2$ with the lattice $\frac{1}{2}\ZZ\times \frac{1}{2}\ZZ$, even though $\frac{1}{2}\ZZ\times \frac{1}{2}\ZZ$ is not contained in $\Qpl$.
In the next theorem we present an approach to construct such a lattice $\lat$ for the case when $\Qpl$ is a union of two translated copies of a lattice.
\begin{reptheorem}{main theorem 2}
Let $P$ be a  convex polytope that $k$-tiles $\RR^d$ with a discrete multiset $\Lambda$. 
Let $\lat_1$ and $\lat_2$ be  translates of one single lattice in $\RR^d$.
If every element in $\Lambda$ is contained in $\lat_1 \cup \lat_2$, then  $P$  $m$-tiles $\RR^d$ with some lattice $\lat$ for some $m$. 
\end{reptheorem}
 \begin{proof}
Without loss of generality, we can assume that $\lat_1=\ZZ^d$ and $\lat_2=a+\ZZ^d$ for some $a$ in $\RR^d$. 
Suppose that  $a$ is a rational vector, let $N$
  be the least common multiple of the denominator of entries of $a$.
Note that  both $\ZZ^d$ and $a+\ZZ^d$ are now contained   
  $(\frac{1}{N}\ZZ)^d$, and by Corollary \ref{corollary one lattice} we have  $P$ $m$-tiles $\RR^d$ with $(\frac{1}{N}\ZZ)^d$ for some $m$. 
Hence we can assume that $a$ is not a rational vector.


By permuting the coordinates, we can  assume that $a=(\alpha_1, \alpha_2, \ldots, \alpha_k, \beta_{k+1}, \ldots, \beta_d)$, where $\alpha_1, \ldots, \alpha_k$ are irrational numbers linearly independent over $\QQ$, and $\beta_{k+1}, \ldots, \beta_d$ are contained in $\langle \alpha_1, \ldots, \alpha_k,1 \rangle_{\QQ}$. 
 Because $a$ is not a rational vector, we have $k\geq 1$.  
 Because $\beta_i$ is contained in $\langle \alpha_1, \ldots, \alpha_k,1 \rangle_{\QQ}$
for all $i \in \{k+1,\ldots, d\}$, 
there exists  $c_{i,j} \in \QQ $ for ${j \in \{1,\ldots,k\}}$ such that    $\beta_i=c_{i,1}\alpha_1 +\ldots+ c_{i,k} \alpha_k+c_{i,k+1}$. 
Let $N$ be the least common multiple of the denominators of $c_{i,j}$, where $i \in \{k+1,\ldots,d\}$ and $j \in \{1,\ldots,k\}$.
Note that  $\beta_i$ is contained in $\langle \alpha_1, \ldots, \alpha_k,1 \rangle_{(\frac{1}{N}\ZZ)^d}$ for all $i \in \{k+1,\ldots,d\}$. 

In the rest of this proof, we will use $L^{\h} $ as a shorthand for $L_{(\frac{1}{N}\ZZ)^d}^{\h} $.
By Lemma \ref{asymptotic method}, we have the following equation:
\begin{equation}\label{equation cute}
g_1 \cdot L^{\h}(v)+  g_2 \cdot L^{\h}(v-a)=k,\end{equation}
for some non-negative real numbers $g_1$ and $g_2$ and for all $v$ in $\RR^d$. 
If $g_1=0$, then $ L^{\h}(v-a)=\frac{k}{g_2}$ for all $v \in \RR^d$.
By Lemma \ref{lemma GRS}, this implies that $P$ $m$-tiles $\RR^d$ with $(\frac{1}{N}\ZZ)^d$ for $m=\frac{k}{g_2}$ and the claim is proved.
By symmetry we get the same conclusion for when $g_2=0$.
Hence we can assume that both $g_1$ and $g_2$ are non-zero.

Let  $L_j=L^{\h}(v-a\cdot j)-\frac{k}{g_1+g_2}$ for all  $j \in \ZZ$.
 Substituting $v$ in Equation \ref{equation cute} with $v-a\cdot j$, we get the following relation for all  $j \in \ZZ$:
\begin{equation}\label{equation L_j and L_j+1}
L_jg_1+L_{j+1}g_2=0,
\end{equation}
and we can without  loss of generality assume that $g_2 \leq g_1$.

 First, suppose that $g_2<g_1$.
   Note that by Equation \ref{equation L_j and L_j+1}  we have $L_{0}=(-\frac{g_2}{g_1})^jL_j$ for all  $j \in \ZZ$.
On the other hand, the function $L^\h$ is a periodic function by Property  \ref{property periodic} in Section \ref{section lattice point enumeration}, which implies that $L_j(v)$ (which is equal to $L^{\h}(v-a)-\frac{k}{g_1+g_2}$) is a bounded function.
 Because $L_j$ is bounded and $g_2<g_1$, we have 
 \[L_0=\lim_{j \to \infty} \left(-\frac{g_2}{g_1} \right)^jL_j= 0.\]
This implies that 
     $ L^{\h}(v)=\frac{k}{g_1+g_2}$ for all $v$ in $\RR^d$, and by Lemma \ref{lemma GRS} we have that $P$ $m$-tiles $\RR^d$ with $(\frac{1}{N}\ZZ)^d$ for $m= \frac{k}{g_1+g_2} $, and the claim is proved.

Now suppose that $g_1=g_2$.  
 We claim that
 $L_{2j+1}=L_0$ for some   $j \in \ZZ$.
 Note that if the claim holds,  then  we can conclude that $L_0=L_1$ (because  $L_{2j+1}=L_1$ and $L_{2j}=L_0$  for all $j \in \ZZ$ by Equation \ref{equation L_j and L_j+1}).
 Because we also have $L_0+L_1=0$ by Equation \ref{equation L_j and L_j+1}, this implies that $L^{\h}(v)-\frac{k}{g_1+g_2}=L_0=0$.
 By Lemma \ref{lemma GRS}, we can then conclude that $P$ $m$-tiles $\RR^d$ with $(\frac{1}{N}\ZZ)^d$ for $m= \frac{k}{g_1+g_2} $, and the claim is proved.




For any two points $w$ and $w'$ in $\RR^d$ and a constant $\epsilon>0$, we say that $w$ is $\epsilon$-close to $w'$ modulo $\ZZ^d$ if $|w-w'+\lambda|<\epsilon$ for some $\lambda$ in $\ZZ^d$. 
Let $a' \in \RR^k $ be the vector $(\alpha_1, \ldots, \alpha_k)$, 
where $\alpha_1,\ldots, \alpha_k$ are irrational numbers defined in the beginning of the proof.
We claim that for any $\epsilon>0$, there exists  $j\in \ZZ$ such that $(2j+1)\cdot a'$ is  $\epsilon$-close to  $0$ modulo $\ZZ^k$.
To prove this claim,
 we  use a  powerful tool from number theory called  the Weyl criterion for the multidimensional case. 
 
 We say that a sequence $(x_n )_{n\in \NN} $ of vectors in  $\RR^k$ is dense   modulo $\ZZ^k$  in $\RR^k$ if for any point $w$ in $\RR^k$ and a constant $\epsilon>0$, there exists a natural number $j$ such that $w$ is $\epsilon$-close to $x_j$ modulo   $\ZZ^k$. 

\begin{theorem} \label{Weyl Criterion} \textnormal{(weak form of Weyl criterion, \cite[Theorem 6.2]{Kuipers})}
Let $(x_n )_{n\in \NN} $ be a sequence of vectors in $\RR^k$. 
The sequence $(x_n )_{n\in \NN} $  is dense   modulo $\ZZ^k$  in $\RR^k$ if  for every lattice point $h\in \ZZ^k$, $h \neq 0$,
\[\lim_{M \to \infty} \frac{1}{M} \sum_{n=1}^{M}e^{2\pi i \langle h,x_n \rangle}=0. \] \qed 
\end{theorem}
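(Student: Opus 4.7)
The plan is to prove the contrapositive using approximation on the torus $\mathbb{T}^k:=\RR^k/\ZZ^k$. Suppose, for contradiction, that the image of $(x_n)_{n\in\NN}$ in $\mathbb{T}^k$ is not dense. Then there is a nonempty open ball $B\subset \mathbb{T}^k$ containing no projection $\bar{x}_n$, and I can select a nonnegative continuous function $f\colon \mathbb{T}^k \to \RR$ supported in $B$ with positive Lebesgue integral, say $I:=\int_{\mathbb{T}^k} f > 0$. By construction the Cesàro averages $A_M(f):=\frac{1}{M}\sum_{n=1}^M f(\bar{x}_n)$ vanish identically for every $M$, so to reach a contradiction it suffices to show $A_M(f) \to I$.

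The second step is to verify convergence on trigonometric polynomials. For each character $\chi_h(x):=e^{2\pi i\langle h,x\rangle}$ with $h\in\ZZ^k\setminus\{0\}$, the hypothesis gives $A_M(\chi_h)\to 0=\int_{\mathbb{T}^k}\chi_h$, while for $h=0$ the average equals $1=\int_{\mathbb{T}^k}\chi_0$. By linearity, for every trigonometric polynomial $p=\sum_{h\in F}c_h \chi_h$ indexed by a finite $F\subset\ZZ^k$, one obtains $A_M(p) \to \int_{\mathbb{T}^k} p$ as $M\to\infty$.

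Finally, I would invoke the Stone--Weierstrass theorem to close the loop: the trigonometric polynomials are uniformly dense in $C(\mathbb{T}^k)$. Given $\varepsilon>0$, pick a polynomial $p$ with $\|f-p\|_\infty < \varepsilon$, so that $\bigl|\int_{\mathbb{T}^k} p - I\bigr|<\varepsilon$ and $|A_M(f)-A_M(p)|<\varepsilon$ for every $M$. Combined with step two, this gives $|A_M(f)-I|<3\varepsilon$ for all sufficiently large $M$, hence $A_M(f)\to I>0$, contradicting $A_M(f)\equiv 0$. The main technical point to watch is that the hypothesis $A_M(\chi_h)\to 0$ is only pointwise in $h$, not uniform, so the polynomial $p$ must be chosen and fixed before sending $M\to\infty$; with $p$ depending on $\varepsilon$ but not on $M$, the interchange goes through cleanly. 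A minor auxiliary concern is that the whole argument tacitly uses $\bar{x}_n\in\mathbb{T}^k$ rather than $x_n\in\RR^k$, but the characters $\chi_h$ are $\ZZ^k$-periodic, so $\chi_h(x_n)=\chi_h(\bar{x}_n)$ and the passage to the torus is automatic.
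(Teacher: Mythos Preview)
Your argument is correct and is essentially the standard proof of Weyl's equidistribution theorem specialized to the density conclusion. However, there is nothing to compare it against: the paper does not supply its own proof of this statement. Theorem~\ref{Weyl Criterion} is quoted as a black box from \cite[Theorem 6.2]{Kuipers} and closed immediately with a \qedsymbol; it is invoked only as a tool inside the proof of Theorem~\ref{main theorem 2}. So your proposal is not an alternative to the paper's proof but rather a self-contained justification of a result the paper merely cites.
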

Let $(x_n )_{n\in \NN} $ be the sequence defined by $x_n=2n \cdot a'$ for all $n \in \NN$. 
Because $\alpha_1, \ldots, \alpha_k$ are irrational numbers that are linearly independent over $\QQ$, we have that  ${ \langle h,a'\rangle}$ is not equal to $0$ for all  $h \in \ZZ^k$. 
Hence the limit of the sum in the Weyl criterion  is equal to 
\[ \lim_{M \to \infty}  \frac{1}{M} \sum_{n=1}^{M}e^{2\pi i \langle h,x_n \rangle}=  \lim_{M \to \infty}  \frac{1}{M} \sum_{n=1}^{M}e^{4n  \pi i \langle h,a' \rangle}= \lim_{M \to \infty}  \frac{1}{M} \cdot \frac{e^{4\pi i \langle h,a' \rangle}(1- e^{4M\pi i \langle h,a' \rangle})}{1-e^{4\pi i \langle h,a' \rangle}} = 0.\]
Hence the Weyl criterion implies that the sequence $(2n \cdot a')_{n \in \NN}$ is dense  modulo $\ZZ^k$  in $\RR^k$.
In the particular case when $w=-a'$, we have that  for any $\epsilon>0$, there exists  $j \in \NN$ such that $-a'$  is $\epsilon$-close to  $2j\cdot a'$ modulo $\ZZ^k$.
Hence  we conclude that $(2j+1) \cdot a'$ is $\epsilon$-close to $0$ modulo $\ZZ^k$. 

Because
$(2j+1) \cdot a'$ is $\epsilon$-close to $0$ modulo $\ZZ^k$, 
this implies that   
 $(2j+1)\alpha_1, \ldots, (2j+1)\alpha_k$ are all $\epsilon$-close to an integer. Because $\beta_i$ is contained in $\langle \alpha_1, \ldots, \alpha_k,1 \rangle_{(\frac{1}{N}\ZZ)^d}$, this also implies that $(2j+1)\beta_i$ is $O(\epsilon)$-close to $\frac{1}{N} \ZZ$ for $i \in \{k+1,\ldots, d\}$. 
 Hence we conclude that we can find an odd number $2j+1$ such that $(2j+1)\cdot a$ is $O(\epsilon)$-close to $(\frac{1}{N}\ZZ)^d$. 
 
 We will now show that $L_{2j+1}=L_0$. 
 Because we assume that $v \in \RR^d$ is in  general position with respect to $(\frac{1}{N} \ZZ)^d$,  by Property \ref{property general position} Section \ref{section lattice point enumeration} there is a sufficiently small open neighborhood $B_v$ of $v$ such that $L^{\h}$ is a constant function in $B_v$. 
 By our previous argument, for any $\epsilon>0$, there is an odd number $2j+1$ such that $(2j+1) \cdot a$ is $O(\epsilon)$-close to $(\frac{1}{N}\ZZ)^d$.  
 By choosing a sufficiently small $\epsilon$, we conclude that $v-(2j+1)\cdot a$ is contained in $B_v$ modulo $(\frac{1}{N}\ZZ)^d$.
 Because the function  $L^{\h}$ has period $(\frac{1}{N}\ZZ)^d$ (Property \ref{property periodic} Section \ref{section lattice point enumeration}),
 this implies that $L_{2j+1}=L^{\h}(v-(2j+1)\cdot a) = L^{\h}(v)=L_0$, and the proof is complete. 
\qed \end{proof}

\begin{remark}
 The proof of Theorem \ref{main theorem 2} can not be altered in any way  to show that $P$  $m$-tiles $\RR^d$  with $\ZZ^d$ instead of $(\frac{1}{N}\ZZ)^d$. 
This can be seen from  Example \ref{example rectangle}, where the rectangle in  the example does not $m$-tile $\RR^2$ with $\ZZ^2$ for any $m$.
\end{remark}

\section{Future research}\label{conjectures}
We conclude this paper by discussing possible future research problems that may lead to a proof of Conjecture \ref{conjecture last boss}.

 \begin{problem} \label{conjecture theorem 1.2}
  Let  $P$ be a convex polytope that $k$-tiles $\RR^d$ with a discrete multiset $\Lambda$, and suppose that every element of $\Lambda$ is  contained in a  quasi-periodic set $\Qpl$. Prove or disprove that $P$ $m$-tiles $\RR^d$ with a lattice $\lat$ for some $m$.
 \end{problem} 
Problem \ref{conjecture theorem 1.2} is  a generalization of Theorem \ref{main theorem 1} by removing Hypothesis \ref{technical condition}.
A more specific question to ask is whether Problem \ref{conjecture theorem 1.2} has a positive answer when $\Qpl$ is a finite union of translates of a single  lattice.
 A positive answer to this specific problem is given by Theorem \ref{main theorem 2}   in the case where $\Qpl$ is a union of two translates of a single  lattice.


\begin{problem}\label{conjecture bridge}
Prove or disprove that if a convex polytope $P$ $k$-tiles $\RR^d$, then $P$  $m$-tiles $\RR^d$ with a discrete multiset $\Lambda$ that is contained in a quasi-periodic set $\Qpl$  for some $m$.
\end{problem} 
For dimension 2 and 3, Problem \ref{conjecture bridge} was positively answered by \cite{Kolountzakis} and \cite{GKRS} respectively.
This problem is open for dimensions higher than 3.

In particular, 
a positive answer to both Problem \ref{conjecture theorem 1.2} and Problem \ref{conjecture bridge}  will imply that Conjecture \ref{conjecture last boss} is true.

\section*{Acknowledgement}
The author would like to thank Sinai Robins for his advice and helpful discussions during the preparation of this paper and for introducing the author to this topic.
 The author would like to thank the anonymous reviewers
for their valuable comments and suggestions to improve the
quality of the paper. 
Lastly, the author would like to thank  Henk Hollmann and Thomas Gavin for proofreading this paper.

%
%

\bibliographystyle{alpha}
\bibliography{quasi}

\end{document}